\date{\today}
\def\I{{\mathcal I}}
\def\End{{\rm End}}
\def\ann{{\rm ann}}
\def\w{\wedge}
\def\dbar{\bar\partial}
\def\C{{\mathbb C}}
\def\w{{\wedge}}
\def\P{{\mathbb P}}
\def\supp{\text{supp}}
\def\D{{\mathcal D}}
\def\M{{\mathcal M}}
\def\S{{\mathcal S}}
\def\PM{{\mathcal{PM}}}
\def\H{{\mathcal H}}
\def\Hom{{\rm Hom\, }}
\def\Homs{{\mathcal Hom\, }}
\def\codim{{\rm codim\,}}
\def\Im{{\rm Im\, }}
\def\Ker{{\rm Ker\,  }}
\def\Exts{{\mathcal Ext}}
\def\Ext{{\rm Ext}}
\def\Ok{{\mathcal O}}
\def\L{{\mathcal L}}
\def\Re{{\rm Re\,  }}
\def\L{{\mathcal L}}
\def\U{{\mathcal U}}
\def\ann{{\rm ann\,}}
\def\Cu{{\mathcal C}}
\def\F{{\mathcal F}}
\def\CH{{\mathcal{CH}}}
\def\be{\begin{equation}}
\def\ee{\end{equation}}
\newtheorem{thm}{Theorem}[section]
\newtheorem{lma}[thm]{Lemma}
\newtheorem{cor}[thm]{Corollary}
\newtheorem{prop}[thm]{Proposition}
\theoremstyle{definition}
\theoremstyle{remark}
\newtheorem{preremark}{Remark}
\newtheorem{preex}{Example}
\newenvironment{remark}{\begin{preremark}}{\qed\end{preremark}}
\newenvironment{ex}{\begin{preex}}{\qed\end{preex}}
\numberwithin{equation}{section}
\title[]{Coleff-Herrera  currents,  duality,
and Noetherian operators}
\begin{document}

\date{\today}

\author{Mats Andersson}

\address{Department of Mathematics\\Chalmers University of Technology and the University of 
G\"oteborg\\S-412 96 G\"OTEBORG\\SWEDEN}

\email{matsa@math.chalmers.se}


\thanks{The author was
  partially supported by the Swedish Natural Science
  Research Council}

\begin{abstract}
Let $\I$ be a coherent subsheaf of a locally free sheaf $\Ok(E_0)$ and suppose that
$\F=\Ok(E_0)/\I$ has pure codimension. Starting with a residue current $R$ obtained from
a locally free resolution of $\F$  we construct
a vector-valued Coleff-Herrera current $\mu$ with support on the variety
associated to $\F$  such that $\phi$ is in $\I$ if and only if
$\mu\phi=0$. Such a current $\mu$ can also be derived  algebraically from a fundamental theorem of
Roos  about  the bidualizing functor, and the relation
between these two approaches is discussed. By a construction due to Bj\"ork
one gets Noetherian operators for $\I$ from the current $\mu$.
The current $R$ also provides an explicit realization of the Dickenstein-Sessa decomposition
and other related canonical isomorphisms.
\end{abstract}


\maketitle

\section{Introduction}

A function $\phi$ in the local ring $\Ok_0$ in one 
complex variable belongs to the ideal $I$ generated by $z^m$ 
if and only if 
$$
\L_\ell\phi(0)=0,\  \ell=0,\ldots, m-1,
$$
where  $\L_\ell=\partial^\ell/ \partial z^\ell$.
These conditions  can be elegantly expressed  by the single 
equation $\phi\dbar(1/z^m)=0$,
where $1/z^m$ is the usual principal value distribution. Moreover,
the current $\mu=\dbar(1/z^m)$ is canonical
up to a non-vanishing holomorphic factor. There is a well-known
multivariable generalization.
Let $f=(f^1,\ldots,f^p)$  be a tuple of 
holomorphic functions in a neighborhood  of the origin in $\C^n$ that 
defines  a complete   intersection, i.e., the codimension
of $Z^f=\{f=0\}$ is equal to $p$. Then the Coleff-Herrera product
$$
\mu^f=\dbar\frac{1}{f^1}\w\ldots\w\dbar\frac{1}{f^p},
$$
introduced in \cite{CH}, is a  $\dbar$-closed $(0,p)$-current  with support on  $Z^f$,
and it is independent (up to a nonvanishing holomorphic factor) of the
choice of generators of the ideal sheaf $\I$ generated by $f$. 
It was proved in \cite{DS} and
\cite{P1}  that $\I$ coincides with the ideal sheaf 
$\ann\mu^f$ of holomorphic functions $\phi$ such that  the current
$\mu^f\phi$ vanishes. This is often referred to as the duality principle.

\smallskip

The Coleff-Herrera product  is the model for a general
Coleff-Herrera current introduced by Bj\"ork:  Given a variety $Z$ of pure codimension
$p$ we say that a (possibly vector-valued) $(0,p)$-current $\mu$ (with support on $Z$)
is a Coleff-Herrera current on $Z$, $\mu\in \CH_Z$, if it is $\dbar$-closed,
annihilated by $\bar \I_Z$ (i.e., $\bar \xi\mu=0$ for each holomorphic 
$\xi$ that vanishes on $Z$),
and has the standard extension property SEP. This means, roughly speaking, that
 $\mu$ has no ``mass'' concentrated on any subvariety of higher codimension;
in particular that $\mu$ is determined by its values on $Z_{reg}$, see, e.g., 
\cite{JEB4} or \cite{A12}, and Section~\ref{chc}. The SEP  implies that
$\ann \mu$ has pure dimension, see, e.g., Proposition~5.3 in \cite{AW2}. 
The condition $\bar\I_Z\mu=0$ means that $\mu$ only involves holomorphic
derivatives.  Following Bj\"ork, see  \cite{JEB4}, 
one can quite easily   find  a finite number of
holomorphic differential operators $\L_\ell$ such that 
$\phi\mu=0$ if and only if
$\L_1\phi=\cdots =\L_\nu\phi=0$ on $Z$; i.e., 
a (complete) set of Noetherian  operators for 
$\ann\mu$. 
In this paper we use the residue theory developed in \cite{AW1} and \cite{AW2}
to extend the  duality for a complete intersection to a general
pure-dimensional ideal (or submodule of a locally free) sheaf.
In particular we can express  such an ideal 
as the annihilator of a  finite set of Coleff-Herrera currents
(Theorem~\ref{thm1} and its corollaries).
Jan-Erik Bj\"ork has pointed out 
to us that one can deduce  the same duality result from a fundamental theorem of
Jan-Erik Roos, \cite{Roos},  about purity for a module in terms
of the bidualizing sheaves,  combined with some  other known facts that 
will be described below. However our approach gives a representation
of the duality and the Coleff-Herrera currents in terms of
one basic residue current, that we first describe.

\smallskip

To begin with, let $\I$ be any  coherent subsheaf of a locally free sheaf $\Ok(E_0)$
over a complex manifold $X$, and assume that 
\begin{equation}\label{acomplex}
0\to \Ok(E_N)\stackrel{f_N}{\longrightarrow}\ldots
\stackrel{f_3}{\longrightarrow} \Ok(E_2)\stackrel{f_2}{\longrightarrow}
\Ok(E_1)\stackrel{f_1}{\longrightarrow}\Ok(E_0)
\end{equation}
is  a locally free resolution of $\F=\Ok(E_0)/\I$.
Here $\Ok(E_k)$ denotes the locally free sheaf
associated to the vector bundle $E_k$ over $X$.
If $X$ is Stein, 
then one can find such a resolution in a neighborhood of any given compact 
subset.
We will  assume that $\F$ has codimension $p>0$;   cf., Remark~\ref{pnoll}. 
Then $f_1$ is (can be assumed to be) generically surjective, 
and the analytic set $Z$ where it
is not surjective has codimension  $p$ and coincides with the 
zero set of the ideal sheaf $\ann\F$. 
In \cite{AW1} we defined, given Hermitian metrics
on $E_k$,  a residue current $R=R_p+R_{p+1}+\cdots $  with support on $Z$, 
where $R_k$ is a $(0,k)$-current that takes values in
$\Hom(E_0,E_k)$, such that a holomorphic section
$\phi\in\Ok(E_0)$ is in $\I$ if and only if  $R\phi=0$.  

Recall that $\F$ has {\it pure} codimension $p$ if
the associated prime ideals (of each stalk) all have codimension $p$.
The starting point in this paper is the following 
result that follows from  \cite{AW2}
(see also Section~\ref{k>p} below);
as  we will see later on
it is in a way  equivalent to Roos' characterization of purity.

\begin{thm}\label{oxy}
The sheaf $\F=\Ok(E_0)/\I$ has pure codimension $p$ if and only if 
$\I$ is equal to the annihilator of $R_p$, i.e.,
$$
\I=\{\phi\in\Ok(E_0);\  R_p\phi=0\}.
$$
\end{thm}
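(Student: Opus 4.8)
The plan is to derive both implications from the structure of the residue current $R$ established in \cite{AW2}, combined with the full duality $\phi\in\J\iff R\phi=0$ recalled above. One inclusion requires no hypothesis: since $R\phi=\sum_{k\ge p}R_k\phi$ and the terms $R_k\phi$ have mutually distinct bidegrees $(0,k)$, the equation $R\phi=0$ forces $R_p\phi=0$, so $\J\subseteq\{\phi;\ R_p\phi=0\}=\ann R_p$ always holds. Thus everything reduces to deciding when the reverse inclusion $\ann R_p\subseteq\J$ is valid.

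The key structural input I would take from \cite{AW2} is that the lowest-order component $R_p$ is \emph{always} a (vector-valued) Coleff-Herrera current on $Z$, regardless of purity: it is $\dbar$-closed, annihilated by $\bar\I_Z$, and satisfies the SEP, while each higher component $R_{p+k}$ with $k\ge 1$ has support of codimension $\ge p+k$. Granting this, the implication ``$\J=\ann R_p\Rightarrow$ purity'' is immediate: because $R_p$ satisfies the SEP, Proposition~5.3 of \cite{AW2} shows that $\ann R_p$ has pure codimension $p$, so if $\J=\ann R_p$ then $\F=\Ok(E_0)/\J$ is pure.

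For the converse I would argue by localizing at the generic points of the irreducible components of $Z$. Assume $\F$ is pure, so that $\J$ is unmixed with only minimal associated primes, all of codimension $p$. By the support estimate, for $k\ge 1$ the current $R_{p+k}$ vanishes in a neighborhood of the generic point $\xi$ of any component of $Z$, since a set of codimension $\ge p+1$ cannot contain $\xi$. Hence $R=R_p$ near $\xi$, and the full duality yields $(\ann R_p)_\xi=(\ann R)_\xi=\J_\xi$. As $\ann R_p$ is itself pure of codimension $p$ and $\J\subseteq\ann R_p$, the two submodules are unmixed with the same minimal primes and agree at every such generic point; an unmixed submodule equals the intersection of the contractions of its localizations at the generic points of its support, so the two must coincide, i.e.\ $\ann R_p=\J$.

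The main obstacle is precisely the structural input invoked above: that $R_p$ is a genuine Coleff-Herrera current irrespective of purity, and in particular that $\dbar R_p=0$. From $\nabla_f R=0$ one obtains only $\dbar R_p=f_{p+1}R_{p+1}$, a $(0,p+1)$-current supported where $f_{p+1}$ drops rank, and the real work is to show, via the dimension principle together with the support and almost-semimeromorphic estimates of \cite{AW2}, that this term carries no mass, so that $R_p$ is indeed $\dbar$-closed and enjoys the SEP. Once this is secured, both implications follow from Proposition~5.3 of \cite{AW2} and the localization argument, and it is exactly this fact that makes the codimension-$p$ component $R_p$ the natural carrier of the duality and accounts for the equivalence with Roos' characterization of purity.
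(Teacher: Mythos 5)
Your easy inclusion $\J\subseteq\ann R_p$ is fine, and your argument for the implication ``$\J=\ann R_p\Rightarrow\F$ pure'' is essentially the paper's own: $R_p$ is pseudomeromorphic with the SEP, so Proposition~5.3 of \cite{AW2} forces $\ann R_p$ to be an intersection of codimension-$p$ primary modules. Note, however, that only the SEP is used there; your stronger assertion that $R_p$ is a full Coleff--Herrera current is not needed for that direction, and it is in fact false.

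The converse direction contains a genuine gap: the structural input you invoke is not merely left unproved in your last paragraph, it is wrong, so it cannot be ``secured.'' You claim that $\supp R_{p+k}$ has codimension $\ge p+k$, hence that $R_{p+k}$ vanishes near the generic points of $Z$ and that $\dbar R_p=0$. If that support estimate held, then for pure $\F$ Eisenbud's criterion \eqref{plast} together with the dimension principle (Proposition~\ref{hyp}) would give $R_{p+k}\equiv 0$ for all $k\ge 1$, i.e.\ $R=R_p$ and $\dbar R_p=f_{p+1}R_{p+1}=0$; but the paper states explicitly that in general $R_p$ is \emph{not} $\dbar$-closed even when $\F$ has pure codimension. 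What is true is only that $\supp R_{p+k}\subset Z$, and that outside $Z_{k+1}$ one has the comparison formula $R_{k+1}=\pm(\dbar\sigma_{k+1})R_k$ with a smooth factor; for a general Hermitian metric the higher components therefore live on all of $Z$, also near its generic points. The quantity with small support is not $R_{p+k}$ but $R_{p+k}\phi$ for a fixed $\phi\in\ann R_p$: if $R_p\phi=0$, then $R_{p+1}\phi=\pm(\dbar\sigma_{p+1})R_p\phi=0$ outside $Z_{p+1}$, so $R_{p+1}\phi$ is pseudomeromorphic of bidegree $(0,p+1)$ with support in $Z_{p+1}$, whose codimension is $\ge p+2$ by \eqref{plast} when $\F$ is pure; hence $R_{p+1}\phi=0$ by Proposition~\ref{hyp}, and induction over $k$ gives $R\phi=0$, whence $\phi\in\J$ by the duality $\ann R=\J$. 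This is exactly Lemma~5.2 of \cite{AW2}, which the paper invokes. Once one argues with $R\phi$ rather than with $R$ itself, the conclusion $\ann R_p=\J$ is obtained stalkwise directly, so your localization at generic points and the unmixedness globalization (which are correct as commutative algebra) become an unnecessary detour.
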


If $\F$ is Cohen-Macaulay we can choose a resolution
\eqref{acomplex} with  $N=p$, and then 
$R=R_p$ is a matrix of $\CH_Z$-currents 
which thus  solves our problem.   However, in general
$R_p$ is not $\dbar$-closed even if $\F$ has pure codimension.
Let
\begin{equation}\label{dualcomplex}
0\to \Ok(E_0^*)\stackrel{f_1^*}{\longrightarrow}\Ok(E_1^*)
\stackrel{f_2^*}{\longrightarrow} \ldots 
\stackrel{f^*_{p-1}}{\longrightarrow} \Ok(E_{p-1}^*)\stackrel{f^*_p}{\longrightarrow}
\Ok(E_p^*)\stackrel{f^*_{p+1}}{\longrightarrow}
\end{equation}
be the dual complex of \eqref{acomplex} and let 
\begin{equation}\label{extrep}
\H^k(\Ok(E^*_\bullet))=\frac{\Ker_{f^*_{k+1}}\Ok(E_k^*)}{f^*_k\Ok(E_{k-1}^*)}
\end{equation}
be the associated  cohomology sheaves.
It turns out that for each choice of $\xi\in\Ok(E_p^*)$ such that $f_{p+1}^*\xi=0$,
the current $\xi R_p$ is in $\CH_Z(E_0^*)$, and  we have in fact   a
bilinear (over $\Ok$)  pairing
\begin{equation}\label{pelle}
\H^p(\Ok(E_\bullet^*))\times\F\to \CH_Z,   \quad  (\xi,\phi)\mapsto \xi R_p\phi.
\end{equation}
Moreover, \eqref{pelle} is independent of the choice of Hermitian metrics
on $E_k$. 
It is well-known  that the sheaves in \eqref{extrep}   
represent the intrinsic  sheaves  $\Exts^k_\Ok(\F,\Ok)$.
(If $Z$ does not have pure codimension $p$  then we define
$\CH_Z$ as  $\CH_{Z'}$, where $Z'$ is the union of irreducible components of
codimension $p$; this  is reasonable, in view of the SEP.)

\begin{thm}\label{thm1} Assume that  $\F$ has  codimension $p>0$. 
The pairing  \eqref{pelle}  induces  an intrinsic  pairing
\begin{equation}\label{ext}
\Exts_\Ok^p(\F,\Ok)\times \F\to \CH_Z. 
\end{equation}
If $\F$ has pure codimension, then the pairing is non-degenerate.
\end{thm}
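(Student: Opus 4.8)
The plan is to establish the two assertions of Theorem~\ref{thm1} separately: first the existence of the intrinsic pairing \eqref{ext}, and then its non-degeneracy under the purity hypothesis. For the first part, I would observe that the pairing \eqref{pelle} is defined through representatives: a class in $\Exts^p_\Ok(\F,\Ok)$ is represented by some $\xi\in\Ok(E_p^*)$ with $f_{p+1}^*\xi=0$, and an element of $\F$ by a section $\phi\in\Ok(E_0)$. To show the pairing descends to cohomology I must verify two compatibilities. On the $\F$-side, I would check that if $\phi=f_1\psi\in\J$ for some $\psi\in\Ok(E_1)$, then $\xi R_p\phi=0$; this should follow from the structure of the residue current $R$, since by Theorem~\ref{oxy} (or rather the underlying property that $R\phi=0$ when $\phi\in\J$) the component $R_p$ annihilates $\J$. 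On the $\Exts$-side, I would check that if $\xi=f_p^*\eta$ is a coboundary, then $\xi R_p\phi=f_p^*\eta\, R_p\phi=\eta\, f_p R_p\phi=0$; here the key input is a comparison formula relating $f_p R_p$ to the lower-degree piece of $R$ (the almost-commutativity $f_k R_k = R_{k-1}$-type relations from \cite{AW1}), so that $f_p R_p$ is exact in the relevant sense and contributes nothing against a holomorphic $\phi$. I would also record that $\xi R_p\in\CH_Z(E_0^*)$, so the pairing genuinely lands in $\CH_Z$.

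\smallskip

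For the intrinsicality — that \eqref{ext} does not depend on the chosen resolution \eqref{acomplex} nor on the Hermitian metrics — I would invoke the stated metric-independence of \eqref{pelle} together with the fact, recalled in the excerpt, that \eqref{extrep} represents the intrinsic functor $\Exts^p_\Ok(\F,\Ok)$. Since any two free resolutions are homotopy equivalent, the induced comparison of the cohomology sheaves is canonical, and I would check that $\xi R_p\phi$ transforms correctly under such a homotopy; combined with metric-independence this yields a well-defined map out of the abstract $\Ext$ sheaf.

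\smallskip

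The substantive part is the non-degeneracy when $\F$ has pure codimension $p$. I would argue non-degeneracy in each slot. \emph{Non-degeneracy in $\F$:} suppose $\phi\in\Ok(E_0)$ satisfies $\xi R_p\phi=0$ for every $\xi$ with $f_{p+1}^*\xi=0$; I must conclude $\phi\in\J$. By Theorem~\ref{oxy}, purity gives $\J=\{\phi; R_p\phi=0\}$, so it suffices to show that the vanishing against all admissible $\xi$ forces $R_p\phi=0$ as a $\Hom(E_0,E_p)$-valued current. Since $R_p\phi$ is a $\CH_Z$-current and $\CH_Z$-currents are determined by their action against holomorphic sections on $Z_{reg}$, the point is that the sections $\xi\in\Ker f_{p+1}^*$ separate the relevant components of $R_p$ pointwise on $Z_{reg}$; this is where purity and the exactness of the dual complex \eqref{dualcomplex} at the regular points enter. \emph{Non-degeneracy in $\Exts^p$:} conversely, if $\xi\in\Ker f_{p+1}^*$ satisfies $\xi R_p\phi=0$ for all $\phi\in\Ok(E_0)$, I would show $\xi\in f_p^*\Ok(E_{p-1}^*)$, i.e. that $\xi$ is a coboundary, by testing against a suitable family of $\phi$ and again using the SEP to reduce to a pointwise statement on $Z_{reg}$, where $R_p$ is explicitly understood.

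\smallskip

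\textbf{The hardest step} I expect to be the non-degeneracy in the $\F$-slot, i.e. deducing $R_p\phi=0$ from $\xi R_p\phi=0$ for all cocycles $\xi$. The difficulty is that $R_p$ is not $\dbar$-closed in general (as the excerpt stresses), so $R_p\phi$ itself is not a priori a Coleff-Herrera current; one must exploit the precise relation between the pairing with cocycles $\xi$ and the structure of $R_p$ established in \cite{AW1,AW2}, together with the purity characterization of Theorem~\ref{oxy}, to bridge from the tested quantities $\xi R_p\phi$ back to the full current $R_p\phi$. I anticipate that the cleanest route reduces everything, via the SEP, to a generic point of $Z$ where $\F$ is Cohen–Macaulay and the resolution can be taken of length $p$, making $R_p$ a genuine $\CH_Z$-matrix and the duality a linear-algebra pairing between $\Ker f_{p+1}^*/\Im f_p^*$ and the cokernel of $f_1$; patching this generic non-degeneracy across $Z_{reg}$ using the SEP then gives the global statement.
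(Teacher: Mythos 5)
Your architecture for the well-definedness and for the $\F$-slot of the non-degeneracy is essentially the paper's. For well-definedness the paper uses exactly the relations you indicate: $\dbar R_k=f_{k+1}R_{k+1}$ gives $\dbar(\xi R_p)=0$ (so $\xi R_p$, being pseudomeromorphic and $\dbar$-closed with support on $Z$, lies in $\CH_Z$ by Proposition~\ref{hyp}), and $\xi=f_p^*\eta$ gives $\xi R_p=\eta f_pR_p=\eta\dbar R_{p-1}=0$ because $R_{p-1}=0$; metric invariance comes from the comparison formula $R_p-\tilde R_p=f_{p+1}M^0_{p+1}$ and resolution invariance from passing to stalks and minimal resolutions. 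For the $\F$-slot your last paragraph is the paper's proof: generically on $Z$ the sheaf $\F$ is Cohen--Macaulay, there one can take $N=p$, so $f^*_{p+1}=0$ and the hypothesis forces $R_p\phi=0$, hence $\phi\in\J$ there; globally $R_p\phi$ is then a pseudomeromorphic $(0,p)$-current supported in codimension $\ge p+1$, hence vanishes identically by the dimension principle in Proposition~\ref{hyp}, and Theorem~\ref{oxy} (this is where purity enters) gives $\phi\in\J$. One correction: it is this dimension principle for \emph{pseudomeromorphic} currents, not the SEP of a Coleff--Herrera current, that does the patching; your third paragraph asserts that $R_p\phi$ ``is a $\CH_Z$-current,'' which is false a priori and contradicts your own fourth paragraph.

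The genuine gap is in the $\Exts$-slot. You propose to show that $\xi R_p\phi=0$ for all $\phi$ forces $\xi\in f_p^*\Ok(E_{p-1}^*)$ by ``testing against a suitable family of $\phi$ and using the SEP to reduce to a pointwise statement on $Z_{reg}$.'' What such a reduction can give is only that the class of $\xi$ in $\H^p(\Ok(E_\bullet^*))$ vanishes on the locus where $\F$ is Cohen--Macaulay, i.e.\ outside a set of codimension $\ge p+1$. To pass from generic vanishing of the class to $\xi\in\Im f_p^*$ everywhere you would need to know that $\Exts^p(\F,\Ok)$ has no nonzero sections supported in codimension $>p$ (purity of the lowest Ext sheaf) --- a nontrivial fact of essentially the same depth as Roos' theorem, which you neither prove nor cite; without it the argument does not close. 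The paper's mechanism avoids this entirely and works with no purity hypothesis: by Proposition~\ref{res2} the transposed current $R^*$ is the residue current of the \emph{dual} Hermitian complex, with $(R^*)^p_\ell=R^\ell_p$; since $R^\ell=0$ for $\ell\ge 1$, the hypothesis $\xi R_p=0$ says precisely that $(R^*)^p\xi=0$, and then the division property of Proposition~\ref{keps}, applied to the dual complex, produces local solutions $\eta$ of $\xi=f_p^*\eta$, i.e.\ the class of $\xi$ is zero. You should replace your SEP argument for this slot by the $R^*$/division argument (or else supply a proof of purity of $\Exts^p(\F,\Ok)$).
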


Notice that $\Homs(\F,\CH_Z)$   is the subsheaf of 
$\Homs(\Ok(E_0),\CH_Z)=\CH_Z(E_0^*)$ consisting of all Coleff-Herrera currents
$\mu$ with values in $E_0^*$ such that
$\mu\phi=0$ for all $\phi\in \I$. 
It follows that we have the equality
\begin{equation}\label{dual3}
\I=\{\phi\in\Ok(E_0);\   \mu \phi=0\ \text{for\ all}\ \mu\in\Homs(\F,\CH_Z)\}
\end{equation}
if $\F$ is pure. The 
sheaf   $\H^p(\Ok(E_\bullet^*))$ is coherent and thus locally finitely generated. 
Therefore we have now a solution to our problem:

\begin{cor}\label{labbe} Assume that $\F$ has pure codimension.
If $\xi_1,\ldots,\xi_\nu\in\Ok(E_p^*)$ generate $\H^p(\Ok(E^*_\bullet))$,
then  $\mu_j=\xi_j R_p$ are in $\Homs(\F,\CH_Z)$  and 
\begin{equation}\label{decomp}
\I=\cap_{j=1}^\nu\ann \mu_j.
\end{equation}
\end{cor}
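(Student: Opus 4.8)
The plan is to derive Corollary~\ref{labbe} directly from Theorem~\ref{thm1} together with the description of $\Homs(\F,\CH_Z)$ given in the text. First I would observe that the non-degeneracy assertion of Theorem~\ref{thm1} is exactly what guarantees that the pairing \eqref{ext} faithfully detects membership in $\J$. Concretely, the excerpt already records in \eqref{dual3} that, under the purity hypothesis,
\begin{equation*}
\J=\{\phi\in\Ok(E_0);\ \mu\phi=0\ \text{for all}\ \mu\in\Homs(\F,\CH_Z)\}.
\end{equation*}
So the whole task reduces to showing that the finitely many currents $\mu_j=\xi_j R_p$ already cut out the same thing as the full (a priori infinite) family $\Homs(\F,\CH_Z)$. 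This is where the coherence of $\H^p(\Ok(E_\bullet^*))$ and the choice of generators $\xi_1,\ldots,\xi_\nu$ enter.

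Next I would verify the two claims stated in the corollary. That each $\mu_j$ lies in $\Homs(\F,\CH_Z)$ is immediate from the pairing \eqref{pelle}: the text asserts that $\xi R_p\in\CH_Z(E_0^*)$ whenever $f_{p+1}^*\xi=0$, and since the $\xi_j$ are sections of $\H^p(\Ok(E_\bullet^*))$ they in particular satisfy $f_{p+1}^*\xi_j=0$; moreover $\mu_j\phi=\xi_j R_p\phi=0$ for $\phi\in\J$ because the pairing \eqref{pelle} factors through $\F=\Ok(E_0)/\J$. Hence $\mu_j\in\Homs(\F,\CH_Z)$ and the inclusion $\J\subseteq\cap_j\ann\mu_j$ is automatic. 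For the reverse inclusion I would argue that any $\mu\in\Homs(\F,\CH_Z)$ is, via the non-degenerate pairing of Theorem~\ref{thm1} and the identification of $\H^p(\Ok(E_\bullet^*))$ with $\Exts^p_\Ok(\F,\Ok)$, of the form $\mu=\xi R_p$ for some $\xi$ representing a section of $\H^p(\Ok(E_\bullet^*))$. Writing $\xi=\sum_j a_j\xi_j$ with holomorphic coefficients $a_j$ (possible since the $\xi_j$ generate the coherent sheaf), one gets $\mu\phi=\sum_j a_j\mu_j\phi$, so $\mu_j\phi=0$ for all $j$ forces $\mu\phi=0$.

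Combining these two steps, the condition ``$\mu\phi=0$ for all $\mu\in\Homs(\F,\CH_Z)$'' is equivalent to ``$\mu_j\phi=0$ for $j=1,\ldots,\nu$'', that is, $\phi\in\cap_{j=1}^\nu\ann\mu_j$. Feeding this equivalence into \eqref{dual3} yields \eqref{decomp}.

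The main obstacle I anticipate is the surjectivity half of the identification $\Homs(\F,\CH_Z)\cong\H^p(\Ok(E_\bullet^*))$, i.e.\ showing that \emph{every} Coleff-Herrera homomorphism $\mu$ arises as $\xi R_p$ for a cocycle $\xi$. The construction in \eqref{pelle} gives the map in one direction for free, but to reduce the infinite family $\Homs(\F,\CH_Z)$ to the finite generating set I really need that this map is onto (equivalently, that the induced pairing \eqref{ext} is not merely non-degenerate on the right but identifies $\Homs(\F,\CH_Z)$ with $\Exts^p_\Ok(\F,\Ok)$). This is precisely the content that Theorem~\ref{thm1} and the remark identifying $\Homs(\F,\CH_Z)$ as a subsheaf of $\CH_Z(E_0^*)$ are designed to supply, so in the write-up I would invoke them rather than re-prove the representability; the coherence statement then does the rest, since a coherent sheaf is locally finitely generated and the generators $\xi_j$ can be lifted to sections of $\Ok(E_p^*)$ killed by $f_{p+1}^*$.
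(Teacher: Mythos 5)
Your forward inclusion and the verification that each $\mu_j=\xi_j R_p$ lies in $\Homs(\F,\CH_Z)$ are fine, and your overall strategy (Theorem~\ref{thm1} plus coherence plus a choice of generators) is the paper's. But the justification of the reverse inclusion has a genuine gap: you reduce $\cap_j\ann\mu_j\subseteq\J$ to the claim that \emph{every} $\mu\in\Homs(\F,\CH_Z)$ factors as $\mu=\xi R_p$ for a cocycle $\xi$, and you assert that this surjectivity is supplied by the non-degenerate pairing of Theorem~\ref{thm1}. It is not. Non-degeneracy of the pairing \eqref{ext} gives exactly two things: injectivity of the map $\Psi\colon\H^p(\Ok(E_\bullet^*))\to\Homs(\F,\CH_Z)$, $\xi\mapsto\xi R_p$ (non-degeneracy in the first variable), and the fact that the currents of the special form $\xi R_p$ jointly detect membership in $\J$ (non-degeneracy in the second variable). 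Surjectivity of $\Psi$ is a strictly stronger statement: in the paper it is the first isomorphism of Theorem~\ref{galopp}, proved only later by means of Malgrange's theorem (or, alternatively, Lemma~\ref{chlma}), and it is not a formal consequence of Theorem~\ref{thm1}. As written, your proof silently imports this deeper result and misattributes its source.

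The gap is easily closed, and the repaired argument is the one the paper intends; you never need to factor an arbitrary $\mu$. Suppose $\mu_j\phi=0$ for $j=1,\dots,\nu$, and let $\xi\in\Ok(E_p^*)$ be any section with $f_{p+1}^*\xi=0$. Since the classes of the $\xi_j$ generate the coherent sheaf $\H^p(\Ok(E_\bullet^*))$, locally $\xi=\sum_j a_j\xi_j+f_p^*\eta$ with $a_j$ holomorphic; and since $\xi R_p$ depends only on the cohomology class of $\xi$ (by the computation $(f_p^*\eta)R_p=\eta f_pR_p=\eta\dbar R_{p-1}=0$ in the proof of Theorem~\ref{thm1}), it follows that $\xi R_p\phi=\sum_j a_j\mu_j\phi=0$. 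Now purity and the non-degeneracy of \eqref{ext} in the $\F$-variable give $\phi\in\J$ directly. Put differently, the paper's own derivation of \eqref{dual3} from Theorem~\ref{thm1} already proceeds through the subfamily $\{\xi R_p\}$; quoting \eqref{dual3} for arbitrary $\mu\in\Homs(\F,\CH_Z)$ and then trying to descend back to that subfamily is precisely the step that forced you to invoke surjectivity you do not have at this point of the paper.
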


\begin{remark}
If $\I$ is not pure,  one obtains a decomposition  \eqref{decomp}
after a preliminary decomposition $\I=\cap\I_\nu$,
where each $\I_\nu$ has pure codimension. 
\end{remark}

In case of a complete intersection, $\Exts^p(\F,\Ok)$ is isomorphic to $\F$ itself.
If  $\F=\Ok(E_0)/\I$ is   a sheaf of Cohen-Macaulay modules there is
also a certain symmetry:
If  \eqref{acomplex} is  a resolution with $N=p$, then
it is well-known, cf., also  Example~\ref{cmcase} below,   
that  the dual complex  \eqref{dualcomplex} is a resolution
of  $\Ok(E_p^*)/\I^*$, where  $\I^*=f_p^*\Ok(E_{p-1}^*)\subset\Ok(E_p^*)$,
and we have

\begin{cor}\label{chcor}
If $\Ok(E_0)/\I$ is Cohen-Macaulay,  then $\Ok(E_p^*)/\I^*$ is Cohen-Macaulay
as well and we have a non-degenerate  pairing
$$
\Ok(E_0)/\I\times \Ok(E_p^*)/\I^*\to \CH_Z, \quad  (\xi,\phi)\mapsto \xi R_p\phi.
$$
\end{cor}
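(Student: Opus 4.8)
The plan is to obtain this as a Cohen-Macaulay specialization of Theorem~\ref{thm1}, the only extra input being a homological identification of the abstract Ext-sheaf with a concrete cokernel. Since $\F$ is Cohen-Macaulay of codimension $p$ we may take the resolution \eqref{acomplex} with $N=p$, and then, as recalled just above (cf.\ Example~\ref{cmcase}), the dual complex \eqref{dualcomplex} has length exactly $p$ and is a locally free resolution
\[
0\to \Ok(E_0^*)\to\cdots\to\Ok(E_{p-1}^*)\stackrel{f_p^*}{\to}\Ok(E_p^*)\to \Ok(E_p^*)/\J^*\to 0
\]
of $\Ok(E_p^*)/\J^*$. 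In particular its only cohomology is in top degree and is the bare cokernel,
\[
\H^p(\Ok(E_\bullet^*))=\Ok(E_p^*)/f_p^*\Ok(E_{p-1}^*)=\Ok(E_p^*)/\J^*,
\]
so under the identification $\H^p(\Ok(E_\bullet^*))\cong\Exts_\Ok^p(\F,\Ok)$ recorded after \eqref{extrep} we get $\Exts_\Ok^p(\F,\Ok)\cong\Ok(E_p^*)/\J^*$.

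First I would establish the Cohen-Macaulay property of $\Ok(E_p^*)/\J^*$. At each point $x$ the displayed resolution shows that its projective dimension is at most $p$; on the other hand it is supported on $Z$ and so has codimension $p$, whence (the local ring $\Ok_x$ being regular) its grade is $p$ as well. As grade never exceeds projective dimension, the latter equals $p$, and the Auslander-Buchsbaum formula then gives that its depth equals $\dim\Ok_x-p$, which is its dimension; thus it is Cohen-Macaulay. (Equivalently, this is the classical statement that the $\Exts^p$-dual of a Cohen-Macaulay sheaf of codimension $p$ over a regular ring is again Cohen-Macaulay of codimension $p$.)

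It remains to produce the non-degenerate pairing, and here the work has already been done. A Cohen-Macaulay sheaf is unmixed, so $\F$ has pure codimension $p$ and Theorem~\ref{thm1} applies, giving a non-degenerate pairing $\Exts_\Ok^p(\F,\Ok)\times\F\to\CH_Z$, $(\xi,\phi)\mapsto\xi R_p\phi$; note that for $N=p$ the condition $f_{p+1}^*\xi=0$ is vacuous, so this is literally $\xi R_p\phi$ with $\xi\in\Ok(E_p^*)$. Substituting $\Exts_\Ok^p(\F,\Ok)=\Ok(E_p^*)/\J^*$ and $\F=\Ok(E_0)/\J$ gives the asserted pairing. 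I expect no real obstacle here: the single point needing a word is why the top cohomology in \eqref{extrep} equals the bare cokernel $\Ok(E_p^*)/\J^*$, which is automatic once \eqref{dualcomplex} has length $p$. The symmetry advertised in the statement is then explained by dualizing \eqref{dualcomplex} once more: its dual is again \eqref{acomplex}, yielding the biduality $\Exts_\Ok^p(\Ok(E_p^*)/\J^*,\Ok)\cong\F$, so the two factors play interchangeable roles.
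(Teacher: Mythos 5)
Your proof is correct, and its skeleton is the same as the paper's: take $N=p$, note that $f_{p+1}^*=0$ makes $\H^p(\Ok(E_\bullet^*))$ the bare cokernel $\Ok(E_p^*)/\J^*\cong\Exts^p_\Ok(\F,\Ok)$, observe that Cohen-Macaulay implies pure codimension, and quote the non-degenerate pairing of Theorem~\ref{thm1}. The one place you genuinely diverge is the auxiliary fact that the dual complex \eqref{dualcomplex} is a resolution and that $\Ok(E_p^*)/\J^*$ is Cohen-Macaulay: the paper's Example~\ref{cmcase} proves this \emph{inside the residue framework} --- with $N=p$ one has $R=R^0_p$, so by Proposition~\ref{res2} the current $R^*$ of the dual Hermitian complex reduces to the single component $(R^*)^p_0=R^0_p$, and Proposition~\ref{keps} applied to $R^*$ yields exactness of \eqref{dualcomplex}, whence Cohen-Macaulayness by the length-$p$-resolution criterion --- whereas you take the resolution property as classical and then run grade $\le$ projective dimension plus Auslander-Buchsbaum. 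Both are legitimate; the paper's route has the virtue of reproving the classical duality by residue calculus, in the spirit of the whole paper, while yours is more elementary on the commutative-algebra side. One caution: citing Example~\ref{cmcase} for the resolution statement and then re-deriving Cohen-Macaulayness is circular-looking (and redundant, since Example~\ref{cmcase} already concludes it); if you want your argument to stand on its own, justify the resolution fact directly by Ext-vanishing ($\Exts^k(\F,\Ok)=0$ for $k<p$ since the grade of $\F$ is $p$, and for $k>p$ since its projective dimension is $p$), after which your Auslander-Buchsbaum computation closes the loop without any appeal to residues.
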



\begin{remark}\label{pnoll}
Assume that $\F$ has codimension $p=0$, or equivalently, $\ann\F=0$.
If it is pure, i.e., $(0)$ is the only associated prime ideal, then
there is a homomorphism $f_0\colon\Ok(E_0)\to\Ok(E_{-1})$ 
such that $\I=\Ker f_0$. It is natural to  consider $f_0$ as a Coleff-Herrera
current $\mu$ associated with the zero-codimensional ``variety'' $X$.
Then 
 $\I=\ann \mu$ and thus analogues of 
Theorem~\ref{oxy}  and Corollary~\ref{labbe} still hold.  
\end{remark}

The duality discussed here  leads to a generalization of the 
Dickenstein-Sessa decomposition that we now  will describe.
It was proved by Malgrange,  see, e.g.,  \cite{JEB4}, 
that the analytic sheaf of distributions $\Cu$ is stalkwise injective.
Thus the double complex 
\begin{equation}\label{dubb}
\Homs_\Ok(\Ok(E_\ell),\Cu^{0,k})=\Cu^{0,k}(E_\ell^*),
\end{equation}
with differentials $\dbar$ and $f^*$,  is exact except at $k=0$ and  $\ell=0$,
where we have the cohomology sheaves $\Ok(E_\ell^*)$ and 
$\Homs(\F,\Cu^{0,\bullet})$, respectively.
By standard homological algebra, 
we therefore have natural isomorphisms
\begin{equation}\label{nathom}
\H^k(\Ok(E^*_\bullet),\Ok) 
\simeq
\H^k(\Homs(\F,\Cu^{0,\bullet})).
\end{equation}
The residue calculus also gives 

\begin{thm}\label{galopp}
Assume that $\codim\F=p>0$. Both mappings 
\begin{equation}\label{natmap}
\H^p(\Ok(E_{\bullet}^*))
\stackrel{\Psi}{\simeq}
\Homs(\F,\CH_Z)\simeq
\H^p(\Homs(\F,\Cu^{0,\bullet}))
\end{equation}
are isomorphisms, and the composed mapping coincides with  the  isomorphism \eqref{nathom}.
\end{thm}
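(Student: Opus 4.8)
The plan is to take the homological isomorphism \eqref{nathom}, call it $\Phi$, as known and to splice $\Homs(\F,\CH_Z)$ into it. Recall that $\Phi$ is an isomorphism purely because the double complex \eqref{dubb} has cohomology only along its two edges, which is a consequence of Malgrange's injectivity of $\Cu^{0,k}$ together with \eqref{acomplex} being a resolution; this uses no purity, matching the sole hypothesis $\codim\F=p>0$. Denote by $\Psi\colon\H^p(\Ok(E_\bullet^*))\to\Homs(\F,\CH_Z)$ the residue map $[\xi]\mapsto\xi R_p$, which is well defined because $f_{p+1}^*\xi=0$ makes $\xi R_p$ a $\dbar$-closed element of $\CH_Z(E_0^*)$ annihilating $\J$, while $\xi\in\Im f_p^*$ gives $\xi R_p=0$ by $f_pR_p=0$; and by $j\colon\Homs(\F,\CH_Z)\to\H^p(\Homs(\F,\Cu^{0,\bullet}))$ the map $\mu\mapsto[\mu]$ induced by $\CH_Z\hookrightarrow\Cu^{0,p}$. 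The composite in \eqref{natmap} is then $j\circ\Psi$.

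I would reduce the theorem to two assertions: (a) $j$ is injective, and (b) $j\circ\Psi=\Phi$. Granting these, $j\circ\Psi=\Phi$ is an isomorphism, so $\Psi$ is injective and $\Im j\supseteq\Im\Phi$ is everything, whence $j$ is surjective; with (a) this makes $j$ an isomorphism, and then $\Psi=j^{-1}\circ\Phi$ is one as well. This is precisely the assertion of the theorem.

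Assertion (a) is the uniqueness of Coleff--Herrera representatives: if $\mu\in\Homs(\F,\CH_Z)$ equals $\dbar\nu$ for some $\nu\in\Homs(\F,\Cu^{0,p-1})$, then $\mu=0$. Indeed, by the SEP the current $\mu$ is determined by its restriction to the regular part of the pure codimension-$p$ set $Z'$ carrying $\CH_Z$, and there a $\dbar$-exact $(0,p)$ Coleff--Herrera current vanishes (it is the zero class in the local cohomology $\H^p_{[Z']}(\Ok)$). As in the first paragraph, only the support convention $\CH_Z=\CH_{Z'}$ is used, not purity of $\F$.

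The substance, and the step I expect to be the main obstacle, is (b): matching the analytically defined $\xi R_p$ with the abstract connecting map of \eqref{dubb}. The governing identity is $\dbar(\xi U_p)=\xi\,\dbar U_p=\xi(f_{p+1}U_{p+1}+R_p)=(f_{p+1}^*\xi)U_{p+1}+\xi R_p=\xi R_p$, valid since $f_{p+1}^*\xi=0$, where $U=\sum_k U_k$ is the potential with $\nabla_f U=I-R$; thus $\xi R_p$ is $\dbar$ of the current $\xi U_p$ occupying the last rung of the zig-zag computing $\Phi([\xi])$. The remaining rungs take values in $E_\ell^*$ with $\ell>0$, are not literally built from $U$, and would be produced by the exactness of the columns of \eqref{dubb}; the delicate bookkeeping is that this homotopy closes up correctly at the bottom to $-\xi$. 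The cleaner route I would actually pursue is to check (b) on $Z_{reg}$ and propagate by the SEP: on the regular part of $Z'$ the higher terms $R_{p+1},\ldots$ vanish, being supported where $\F$ is not Cohen--Macaulay, so $R=R_p$ is $\dbar$-closed, \eqref{acomplex} may be taken of length $p$, and there the identification of $\xi R_p$ with the connecting map reduces to the classical complete-intersection computation; the SEP of the Coleff--Herrera currents then forces equality on all of $Z$. The technical inputs here, namely that $\H^p(\Homs(\F,\Cu^{0,\bullet}))$ is carried by Coleff--Herrera currents and that the higher $R_k$ are supported on the non-Cohen--Macaulay locus, are what I would draw from \cite{AW1} and \cite{AW2}.
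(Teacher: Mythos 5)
Your formal skeleton --- (a) injectivity of $j$, (b) $j\circ\Psi=\Phi$, then the two-out-of-three conclusion --- is exactly the paper's, and (a) is fine: it is the uniqueness half of the Dickenstein--Sessa decomposition, which the paper also simply quotes (Lemma~3.3 in \cite{A12}). The gap is in (b), which you rightly single out as the substance. Your first route stalls only because you assume the intermediate rungs ``are not literally built from $U$''. They are: the current $U$ of \cite{AW1} has components $U^\ell_k$ for \emph{all} $\ell$, not just $\ell=0$, and Proposition~\ref{res2} (minimal inverses dualize) turns the basic relation $\nabla_{\End}U=I-R$ into the single identity
$$
\nabla^*\bigl((U^*)^p\xi\bigr)=\xi-(R^*)^p\xi=\xi-\xi R_p ,
$$
valid when $f^*_{p+1}\xi=0$, where one also uses that $R^\ell=0$ for $\ell\ge1$ (Theorem~3.1 in \cite{AW1}, since \eqref{acomplex} is a resolution). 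The components $(U^*)^p_\ell\xi=\xi U^\ell_p\in\Cu^{0,p-\ell-1}(E_\ell^*)$, $\ell=0,\dots,p-1$, are precisely all the rungs of your zig-zag, the bottom one being your $\xi U^0_p$; the bookkeeping closes up automatically, with no appeal to exactness of \eqref{dubb} at all. This identity (\eqref{astrid} in the paper) is the entire proof of (b), and it is the step your proposal is missing.

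The fallback you say you would actually pursue does not work. The zig-zag representative of $\Phi([\xi])$ is an arbitrary current in $\Homs(\F,\Cu^{0,p})$; it is not known to be pseudomeromorphic or Coleff--Herrera (that is essentially what the theorem asserts), and the SEP is a property of such currents, not of classes in $\H^p(\Homs(\F,\Cu^{0,\bullet}))$. Verifying $[\,\mu-\xi R_p\,]=0$ near points of $Z_{reg}$ in the Cohen--Macaulay locus only tells you that the difference class, transported by $\Phi^{-1}$ into the coherent sheaf $\Exts^p(\F,\Ok)$, is a section supported in codimension $\ge p+1$; to kill it you would need to know that $\Exts^p(\F,\Ok)$ admits no nonzero sections supported in codimension $>p$. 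That is a purity statement of Roos type which you neither prove nor may assume --- Theorem~\ref{galopp} hypothesizes only $\codim\F=p>0$, not purity of $\F$ --- so ``the SEP forces equality on all of $Z$'' is unsupported. (A smaller inaccuracy: on the Cohen--Macaulay locus the situation is not ``the classical complete-intersection computation''; $\F$ need not be cyclic there, and even with $N=p$ what one actually does is the displayed identity.) So the proposal is incomplete at its central step; the paper's homotopy $(U^*)^p\xi$ supplies exactly what is missing.
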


These isomorphisms seem to be known as ``folklore'' since long ago,
cf., Section~\ref{bjork} below. Our  contribution should be the
proof by residue calculus, and  especially,  the
realization of the mapping $\Psi$ as $\xi\mapsto \xi R_p$.

\begin{ex} If $\mu\in\CH_Z$ is annihilated by $\I$ it follows that we have the
factorization $\mu=\xi R_p$. There are analogous isomorphisms
where $\Ok$ is replaced by $\Omega^r$, the sheaf of holomorphic $(r,0)$-forms,
and Coleff-Herrera currents of bidegree $(r,p)$, 
$\CH^r_Z=\CH_Z\otimes_{\Ok}\Omega^r$. 
For instance it follows that there is  a factorization
$$
[Z]=\xi R_p,
$$
where $[Z]$ is the Lelong current, and $\xi$ is in $\Omega^p(E^*_p)$
with  $f_{p+1}^*\xi=0$.
\end{ex}

\begin{ex}
We can rephrase the second isomorphism in \eqref{natmap} as  the decomposition
\begin{multline}\label{kraka}
{\Ker}\big(\Homs(\F,\Cu^{0,p})\stackrel{\dbar}{\to}(\Homs(\F,\Cu^{0,p+1})\big)=\\
=\Homs(\F,\CH_Z)\oplus \dbar\Homs(\F,\Cu^{0,p-1}).
\end{multline}
For a given $\dbar$-closed $(0,p)$-current $\mu$
(with values in $E_0^*$ and    annihilated by $\I$),
its canonical projection in  $\Homs(\F,\CH_Z)$ is given by $\xi R_p$,
where $\xi$ is obtained from $\mu$ via the isomorphism \eqref{nathom}.
\end{ex}

\begin{ex}
Assume that $Z$ has pure codimension $p$ and let
$\Cu_Z^{0,k}$ denote the sheaf of $(0,k)$-currents with support on $Z$.
If  $\F$ has support on $Z$, then  $\Homs(\F,\Cu^{0,k})=\Homs(\F,\Cu^{0,k}_Z)$.
Since any current with support on $Z$ must be annihilated
by some power of $\I_Z$, \eqref{kraka} implies  the 
  decomposition
\begin{equation}\label{ds}
\Ker \big(\Cu^{0,p}_Z\stackrel{\dbar}{\to}\Cu^{0,p+1}_Z\big)
=\CH_Z\oplus\dbar\Cu_Z^{0,p-1}
\end{equation}
that was first proved in \cite{DS} by Dickenstein and Sessa
(in the case of a complete intersection;   see \cite{JEB4}
for the general case).
\end{ex}

The main results are  proved in Section~\ref{proofs}.  
In Section~\ref{bjork} we sketch  a purely algebraic proof
of Theorem~\ref{thm1} (except for the explicit residue representation)
based on Roos' theorem.
In Section~\ref{absolut} we consider in some more detail the
absolute case, i.e., $p=n$, and in Section~\ref{coho}
we briefly discuss a cohomological variant of the duality.
In Section~\ref{k>p} we consider a partial generalization of \eqref{natmap}
to $k>p$; again we can trace residue manifestations of Roos' theorem.

In Section~\ref{recollect} we collect some basic material about
residue currents. 
For the reader's convenience we include 
Bj\"ork's  construction of Noetherian operators for the ideal $\ann \mu$.
To further exemplify the utility of the residue
calculus,  we include a  proof of Malgrange's theorem by means of
residues and integral formulas in Section~\ref{malgrange}.

\smallskip

All results above have natural analogues for polynomial ideals and modules:
Let $I$ be a submodule of $\C[z_1,\ldots,z_n]^r$, and assume that
$F=\C[z_1,\ldots,z_n]^r/I$ has positive codimension $p$.
From a free resolution of the of the corresponding homogeneous
module over the graded ring $\C[z_0,\ldots,z_n]$ we constructed
in \cite{AW1} a residue current on $\P^n$ whose restriction
$R$ to $\C^n$ has the property that $\Phi\in 
\C[z_1,\ldots,z_n]^r$  is in $I$ if and only if $R\Phi=0$ in $\C^n$.
If $F$ has pure codimension, then precisely as in the
semi-global case above we have that
$\Phi$ is in $I$ if and only if $R_p\Phi=0$. By the same proofs we get
complete analogues of Theorem~\ref{thm1} and its corollaries.
In particular if $F$ is pure,
we get  a finite number of global Coleff-Herrera
currents $\mu_j=\xi R_p$ such that $\Phi\in I$ if and only if
$\mu_j\Phi=0$ for each $j$.  Moreover, since $R_p$  has a current
extension to $\P^n$, following  the proof in Section~\ref{chc}
with $\Omega=\C^n$,  
we obtain for each $\mu_j$ a finite set of 
differential operators $\L_{j\ell}$ with polynomial
coefficients such that
$\L_{j\ell}\Phi$ vanishes on $Z$ for all $\ell$ if and only if
$\Phi$ is in the annihilator of $\mu_j$.
Starting with a primary decomposition of $I$ we obtain in this way
a complete proof of the existence of Noetherian 
operators for an arbitrary polynomial ideal, a fact first proved
by Ehrenpreis and Palamodov as the corner stone  in the
celebrated fundamental principle, see
\cite{Ehr}, \cite{Pal}, \cite{Hor}, and  \cite{JEB1}.
For a discussion about effectivity, see  \cite{Ob}.

\smallskip
{\bf Acknowledgement}  I am grateful to Jan-Erik Bj\"ork for 
invaluable discussions on these matters, and  for 
communicating  the arguments in Section~\ref{bjork}. I also
would like to thank the referee for important suggestions.

\section{Some elements of residue theory}\label{recollect}

Let $X$ be an $n$-dimensional complex manifold.
In \cite{AW2} was  introduced the sheaf of {\it pseudomeromorphic}  currents
$\PM$. Roughly speaking a current $\mu$ is pseudomeromorphic if locally
it is the push-forward under a modification $\pi\colon\tilde X\to X$
of a (finite sum of) currents like
$$
\dbar\frac{1}{s_1^{\alpha_1}}\w\cdots \w\dbar\frac{1}{s_q^{\alpha_q}}
\w \frac{\omega}{s_{q+1}^{\alpha_{q+1}}\cdots s_n^{\alpha_n}}
$$
where $s$ is a local coordinate system and  $\omega$ is a smooth form with compact support. 
Here $q$ may be $0$ which means that we have no residue factor but only
principal value factors. 
The sheaf $\PM$ is closed under $\dbar$ and multiplication with smooth forms.
It turns out that
if $\mu$ is in $\PM$ and $V$ is a subvariety, then the  restriction
of $\mu$ to the open set $X\setminus V$ has a natural extension to a pseudomeromorphic current
${\bf 1}_{X\setminus V}\mu$ on $X$ such that ${\bf 1}_V\mu:=\mu-{\bf 1}_{X\setminus V}\mu$
has support on $V$. If $h$ is any holomorphic tuple such that $V=\{h=0\}$, then
$\lambda\mapsto |h|^{2\lambda}\mu$, that is well-defined if $\Re\lambda>>0$, has
a current-valued  analytic continuation to $\Re\lambda>-\epsilon$, and the value at $\lambda=0$,
$|h|^{2\lambda}\mu|_{\lambda=0}$,  is precisely ${\bf 1}_{X\setminus V}\mu$.

If $\mu$ is in $\PM$ and has support on $V$, then $\bar I_V\mu=0$, i.e., $\bar\xi\mu=0$
for each holomorphic function that vanishes on $V$. If $\mu$ has support on $V$
we say that it has the standard extension property, SEP, if ${\bf 1}_W \mu=0$ for
each $W\subset V$ of positive codimension.  For (the equivalence to) the
 more classical way to  define SEP, see \cite{A12} Proposition~5.1. 
We also have the {\it dimension principle}: 
\begin{prop}\label{hyp}[\cite{AW2}, Corollary~2.4]
If $\mu\in\PM$ has  bidegree $(r,k)$ and  support on
a variety $V$ of codimension $>k$,  then $\mu=0$. 
\end{prop}

It follows that if $\mu$ has bidegree $(r,p)$ and support on $V$ with
codimension $p$ then it has automatically the SEP with respect to $V$.

\subsection{Coleff-Herra currents and Noetherian operators}\label{chc}

Let $V$ be a subvariety with pure codimension $p$. We define the sheaf of 
Coleff-Herra currents $\CH_V^r$ as the subsheaf of $\PM$ of currents of bidegree
$(r,p)$ that has support on $V$ and are $\dbar$-closed. See 
Proposition~5.1 in \cite{A12} for an  equivalent definition.

\begin{thm}[Bj\"ork \cite{JEB4}]
Let $V$ be a germ of an analytic variety of pure codimension $p$ at $0\in\C^n$.
There is a neighborhood $\Omega$ of $0$ such that for each
$\mu\in\CH_V(E_0^*)$ in $\Omega$, there are  holomorphic
differential operators $\L_1,\ldots,\L_\nu$ in $\Omega$ such that
for any $\phi\in\Ok(E_0)$,    
$\mu\phi=0$ if and only if
\begin{equation}\label{bjarne}
\L_1\phi=\cdots=\L_\nu\phi=0 \ \text{on}\ \ V.
\end{equation}
\end{thm}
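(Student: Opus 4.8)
The plan is to reduce the statement to the regular part of $V$ by means of the SEP, to put $\mu$ into a local normal form there, and finally to organize the resulting transversal differential operators into a finite family by invoking coherence together with Hironaka's theorem in order to cross the singular locus.

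First I would observe that for holomorphic $\phi\in\Ok(E_0)$ the product $\mu\phi$ is again a Coleff-Herrera current: it is $\dbar$-closed since $\dbar(\mu\phi)=(\dbar\mu)\phi=0$, it is annihilated by $\bar\I_V$, it has support on $V$, and it inherits the SEP. By the SEP (cf.\ Proposition~\ref{hyp}) such a current vanishes as soon as it vanishes in a neighborhood of the generic point of each irreducible component of $V$; hence it is enough to characterize, on $V_{reg}$, the vanishing of the transversal data of $\mu\phi$.

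Next, over a chart $U$ meeting $V_{reg}$ I would choose coordinates $(z',z'')=(z_1,\dots,z_p,z_{p+1},\dots,z_n)$ with $V\cap U=\{z'=0\}$. The three defining properties of a Coleff-Herrera current force the normal form
$$
\mu=\sum_{|\alpha|\le m}c_\alpha(z'')\,\partial_{z'}^\alpha\Bigl(\dbar\tfrac{1}{z_1}\w\cdots\w\dbar\tfrac{1}{z_p}\Bigr),
$$
where the sum is finite, the $c_\alpha$ are holomorphic and $E_0^*$-valued, and $\partial_{z'}^\alpha$ differentiates only in the transversal variables; this is precisely the step where $\bar\I_V\mu=0$ (holomorphic dependence in the normal directions), $\dbar\mu=0$, and the bidegree $(0,p)$ are used. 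Pairing against a test form and integrating by parts to transfer $\partial_{z'}^\alpha$ onto $\phi$, the residue $\dbar(1/z_1)\w\cdots\w\dbar(1/z_p)$ evaluates the transversal jet at $z'=0$, so that schematically
$$
\mu\phi=\Bigl(\sum_{|\alpha|\le m}\langle c_\alpha,\partial_{z'}^\alpha\phi\rangle\Bigr)\Big|_{z'=0}\,[V\cap U].
$$
Thus on $U$ the equation $\mu\phi=0$ is equivalent to the vanishing on $V$ of the single holomorphic differential operator $\L_U=\sum_\alpha\langle c_\alpha,\partial_{z'}^\alpha\rangle$, a Noetherian operator relative to the chart $U$.

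The main work --- and the \emph{main obstacle} --- is to pass from these chart-wise operators, whose coefficients are holomorphic only along the branched cover $V_{reg}\to\C^{n-p}$ and a priori multivalued, to finitely many genuine holomorphic differential operators defined in a full neighborhood $\Omega$ of $0$ in $\C^n$. Here I would proceed in two steps. First, using Hironaka's resolution (or the normalization of $V$) I would realize $\mu$ as the pushforward under a modification $\pi\colon\tilde X\to X$ of a monomial residue times a smooth form; this regularizes the coefficients $c_\alpha$ and, because $\pi$ is proper, bounds the number of sheets and hence the number of operators. Second, the sheaf of transversal differential operators occurring in the normal form is coherent, so by Oka's theorem it is finitely generated near $0$; choosing generators $\L_1,\dots,\L_\nu$ with holomorphic coefficients in a common neighborhood $\Omega$, and passing to symmetric combinations over the sheets in order to obtain single-valued coefficients across $V_{sing}$, produces a finite family whose common kernel on $V$ is exactly $\ann\mu$. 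Combining this with the chart-wise equivalence and the SEP reduction completes the argument.
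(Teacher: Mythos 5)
Your chart-wise reduction on $V_{reg}$ contains an error. With the normal form $\mu=\sum_{|\alpha|\le m}c_\alpha(z'')\,\partial_{z'}^\alpha\bigl(\dbar\tfrac{1}{z_1}\w\cdots\w\dbar\tfrac{1}{z_p}\bigr)$, the product $\mu\phi$ is \emph{not} $\bigl(\sum_\alpha\langle c_\alpha,\partial_{z'}^\alpha\phi\rangle\bigr)\big|_{z'=0}\,[V\cap U]$: by Leibniz' rule the derivative currents $\partial_{z'}^{\gamma}\bigl(\dbar\tfrac{1}{z_1}\w\cdots\w\dbar\tfrac{1}{z_p}\bigr)$ survive for every $\gamma\le m$, with coefficients (up to constants) $\sum_{\alpha\ge\gamma}\binom{\alpha}{\gamma}\langle c_\alpha,\partial_{z'}^{\alpha-\gamma}\phi\rangle$ restricted to $z'=0$. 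Hence $\mu\phi=0$ is equivalent to a \emph{triangular family} of equations on $V$, one for each order $\gamma$, not to a single equation. Already for $\mu=\dbar(1/z^2)$ in one variable one has $\ann\mu=(z^2)$, i.e.\ the two conditions $\phi(0)=\phi'(0)=0$, which no single operator $\phi\mapsto a\phi+b\phi'$ evaluated at the origin can express. This is precisely why the paper's proof produces a family $Q_\ell$, $\ell\le M$, and establishes the equivalence by testing against $\xi=w^\ell\eta$ with downward induction on $\ell$. This gap is repairable.

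The serious gap is the globalization across $V_{sing}$, which you correctly identify as the main obstacle but then dispatch with two moves that do not work. First, there is no ``coherent sheaf of transversal differential operators'' on a neighborhood of $0$ to which Oka's theorem could be applied: your coefficients $c_\alpha$ are defined only chart by chart on $V_{reg}$, and proving that they (after clearing denominators) extend holomorphically across $V_{sing}$ is exactly the content of the theorem, so this step is circular. Second, ``symmetric combinations over the sheets'' cannot produce an equivalent system: Noetherian operators must be \emph{linear} holomorphic differential operators, and among the symmetric functions of the sheet-wise values only the sum is linear in $\phi$; its vanishing is strictly weaker than vanishing on every sheet, while the higher elementary symmetric functions are nonlinear in $\phi$ and hence inadmissible. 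The paper crosses the singular locus by a different mechanism: the factorization theorem (from \cite{JEB4}, or Theorem~4.1 in \cite{A12}) gives $\mu=A\,\dbar(1/f_1^{M_1+1})\w\cdots\w\dbar(1/f_p^{M_p+1})$ with a \emph{single} function $A$ holomorphic in all of $\Omega$, where $f$ is a complete intersection whose zero set contains $V$ among its components. Then, in the coordinates $(z,w)=(\zeta,f(\zeta,\omega))$, valid off $W=\{h=0\}$ with $h=\det(\partial f/\partial\omega)$, the relation $\partial\omega/\partial w=\gamma/h$ with $\gamma$ holomorphic in $\Omega$ shows that the chart-wise operators $Q_\ell$ have coefficients holomorphic up to a power of $h$ in the denominator, so $\L_\ell=h^NQ_\ell$ are globally holomorphic, and the SEP guarantees that testing on $V\setminus W$ suffices. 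Without an input of this kind --- a global holomorphic representation of $\mu$, or some other universal-denominator argument --- your outline does not get past $V_{sing}$.
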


\begin{proof}
In a suitable pseudoconvex neighhborhood $\Omega$ of $0$ 
we can find  holomorphic functions $f_1,\ldots,f_p$,  forming
a complete intersection,  such that
$V\cap \Omega$, henceforth denoted just $V$, 
is a union of irreducible components of $V_f=\{f=0\}$, and such that
$
df_1\w\ldots\w df_p\neq 0
$
on $V\setminus W$,  where $W$ is a hypersurface not containing any component
of $V_f$. 
By a suitable choice of coordinates $(\zeta,\omega)\in\C^{n-p}\times\C^p$ we may
assume that  $W$ is the zero set of 
$
h=\det (\partial f/\partial\omega).
$
Let
$
z=\zeta,  w=f(\zeta,\omega).
$
Since
$d(z,w)/d(\zeta,\omega)=h$,
locally in $\Omega\setminus W$,    $(z,w)$ is a holomorphic  coordinate system.
If we take the multiindex $M$ so large that $\mu$ is annihilated by
$f_j^{M_j+1}$, it  follows from
\cite{JEB4} (or Theorem~4.1 in \cite{A12}) that 
there is a  holomorphic function $A\in\Omega$ such that
$$
\mu=A \dbar\frac{1}{f_1^{M_1+1}}\w\ldots \w\dbar\frac{1}{f_p^{M_p+1}}
$$
in $\Omega$. Thus locally in $\Omega\setminus W$,
$$
\mu.\xi=\int_{w=0}\sum_{\alpha\le M}c_\alpha\frac{\partial^{M-\alpha}A(z,0)}{\partial w^{M-\alpha}}
\gamma\neg \frac{\partial^\alpha\xi}{\partial w^\alpha},
$$
where $\gamma\neg$ is contraction with the vector field
$$
\gamma=\frac{\partial}{\partial w_p}\w\ldots\w \frac{\partial}{\partial w_1}.
$$
Now  $\mu\phi=0$ if and only if for all test forms $\xi$,
\begin{equation}\label{lotta}
0=\mu\phi.\xi=
\int_{w=0}\sum_{\ell\le M} (Q_\ell\phi)\gamma\neg \frac{\partial^\ell}{\partial w^\ell}\xi,
\end{equation}
where
$$
Q_\ell=\sum_{\ell\le\alpha\le M}
c_{\alpha,\ell}\frac{\partial^{M-\alpha}A}{\partial w^{M-\alpha}}
\frac{\partial^{\alpha-\ell}}{\partial w^{\alpha-\ell}}.
$$
Applying to $\xi=w^\ell\eta$ 
(induction over $\ell$ downwards) it follows that \eqref{lotta}  holds for all 
$\xi$   if and only if
$Q_\ell\phi=0$ (locally) on $V\setminus W$ for all $\ell\le M$.

However, 
$\partial\omega/\partial w=(\partial f/\partial\omega)^{-1}=\gamma/h
$
where $\gamma$ is a holomorphic matrix in $\Omega$.
It follows that $\L_\ell=h^NQ_\ell$ are well-defined and holomorphic in $\Omega$
if $N$ is large enough, and by the SEP,   $\mu\phi=0$ in $\Omega$
 if and only if \eqref{bjarne} holds.
\end{proof}

One also get a global (in $\Omega$) representation of $\mu$ in this way:
Notice that for some $L$,  $\tilde\gamma=h^L\gamma$ is holomorphic
in $\Omega$. One can verify that (with $\L=\L_0$), actually
$$
\mu.\xi=\int_Z\frac{1}{h^{M+L}}\tilde\gamma\neg \L\xi
$$
for all test forms, if the right hand side is interpreted as a principal
value.

\subsection{Residue currents associated with Hermitian  complexes}\label{sec3}

 and pointed out that the currents  $U$ and $R$ are pseudomeromorphic. 
\bigskip

We first have to recall the construction in \cite{AW1}. Let
\begin{equation}\label{hcomplex}
0\to E_N\stackrel{f_N}{\longrightarrow}\ldots
\stackrel{f_3}{\longrightarrow} E_2\stackrel{f_2}{\longrightarrow}
E_1\stackrel{f_1}{\longrightarrow}E_0\to\cdots 
\stackrel{f_{-M+1}}{\longrightarrow} E_{-M}\to 0
\end{equation}
be a generically exact complex of Hermitian vector bundles over
a complex manifold $X$, and let
\begin{equation}\label{complex}
0\to \Ok(E_N)\stackrel{f_N}{\longrightarrow}\ldots
\stackrel{f_1}{\longrightarrow}\Ok(E_0)\longrightarrow 
\cdots \stackrel{f_{-M+1}}{\longrightarrow} \Ok(E_{-M})\to 0
\end{equation}
be the corresponding complex of locally free sheaves. Assume that
\eqref{hcomplex} is pointwise exact outside the variety $Z$. Furthermore, 
over $X\setminus Z$ let $\sigma_k\colon E_{k-1}\to E_k$ be the minimal
inverses of $f_k$. Then
$
f\sigma+\sigma f=I_E,
$
where $I_E$ is the identity on $E=\oplus E_k$, $f=\oplus f_k$ and $\sigma=\oplus\sigma_k$.
The bundle $E$ has a natural superbundle structure
$E=E^+\oplus E^-$, where $E^+=\oplus E_{2k}$ and $E^-=\oplus E_{2k+1}$,
and $f$ and $\sigma$ are odd mappings with respect to this structure,
see, e.g., \cite{AW1} for more details.
The operator   $\nabla=f-\dbar$ acts on $\Cu^{0,\bullet}(X, E)$ and extends to
a mapping $\nabla_\End$ on $\Cu^{0,\bullet}(X, \End E)$
and  $\nabla_{\End}^2=0$.
If  
$$
u=\frac{\sigma}{\nabla_{\End}\sigma}=\sigma+\sigma(\dbar\sigma)+\sigma(\dbar\sigma)^2+\cdots
$$
it turns out that $\nabla_\End u=I_E$ in $X\setminus Z$. 
One can define a canonical current extension $U$ of $u$
across $Z$ as the analytic continuation  to $\lambda=0$ of
$|F|^{2\lambda}u$, where $F$ is any holomorphic function that
vanishes on $Z$. In the same way we  can define
the current 
$R=\dbar|F|^{2\lambda}\w u|_{\lambda=0}$
with support on $Z$, and then 
\begin{equation}\label{res}
\nabla_\End U=I_E-R.
\end{equation}
More precisely
$$
R=\sum_\ell  R^\ell=\sum_{\ell k} R^\ell_k, 
$$
where $R^\ell_k$ is a $(0,k-\ell)$-current that takes values in $\Hom(E_\ell,E_k)$, i.e., 
$$
R^\ell_k\in\Cu^{0,k-\ell}(X,\Hom(E_\ell,E_k)).
$$
Moreover we have (Proposition~2.2 in \cite{AW1})
\begin{prop} \label{keps}
If $\phi\in\Ok(E_\ell)$ and
$f_\ell\phi=R^\ell\phi=0$, then $\phi= f_{\ell+1}\psi$ has local 
solutions $\psi\in\Ok(E_{\ell+1})$.
If $R^{\ell+1}=0$, then $\phi=f_{\ell+1}\psi$ has local holomorphic solutions
$\psi$  if and only if
$f_\ell\phi=R^\ell\phi=0$.
\end{prop}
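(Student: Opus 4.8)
The plan is to read everything off the fundamental identity \eqref{res}, $\nabla_{\End}U=I-R$. First I would apply this identity to the holomorphic section $\phi\in\Ok(E_\ell)$. Since $\phi$ is holomorphic, $\nabla\phi=f_\ell\phi$, and the Leibniz rule for $\nabla$ acting on $U\phi$ gives $\nabla(U\phi)=(\nabla_{\End}U)\phi\pm Uf_\ell\phi=\phi-R^\ell\phi\pm Uf_\ell\phi$, using that only the source-$E_\ell$ part $R^\ell$ of $R$ acts on $\phi$. Under the hypotheses $f_\ell\phi=0$ and $R^\ell\phi=0$ this collapses to $\nabla(U\phi)=\phi$.

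Next I would decompose $U\phi=\sum_{j\ge 0}w_j$, where $w_j:=U^\ell_{\ell+1+j}\phi\in\Cu^{0,j}(E_{\ell+1+j})$, and split the equation $\nabla(U\phi)=\phi$ according to target bundle and form degree. The component of value $E_\ell$ and degree $(0,0)$ reads $f_{\ell+1}w_0=\phi$, so that $w_0=U^\ell_{\ell+1}\phi$ is already a (current) solution of $f_{\ell+1}\psi=\phi$; the remaining components give the ``staircase'' relations $\dbar w_j=f_{\ell+2+j}w_{j+1}$. In particular $w_0$ need not be holomorphic, since $\dbar w_0=f_{\ell+2}w_1$.

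The heart of the argument, and the step I expect to be the main obstacle, is to upgrade the current solution $w_0$ to a holomorphic one. Since the complex is finite, $w_j=0$ for $j$ large, so the top nonzero term $w_{j_0}$ is $\dbar$-closed; solving $\dbar$ locally (the Dolbeault lemma for currents) and substituting back into the staircase removes the top term at the cost of modifying the term just below it, without disturbing the bottom identity $f_{\ell+1}w_0=\phi$. Iterating this descent I would replace $w_0$ by some $\psi\in\Ok(E_{\ell+1})$ with $f_{\ell+1}\psi=\phi$: at the end of the process the surviving bottom term is a $\dbar$-closed current of bidegree $(0,0)$, hence holomorphic. Equivalently, one corrects the total current $U\phi$ by $\nabla$-exact terms, which leave $\nabla(U\phi)=\phi$ unchanged, until only the holomorphic $E_{\ell+1}$-component remains. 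Keeping track of the signs in $\nabla=f-\dbar$ and checking that the successive $\dbar$-corrections remain compatible with the maps $f$ is the delicate bookkeeping here; this establishes the first statement, and thereby the ``if'' direction of the second.

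For the converse in the second statement, the ``if'' part is exactly the first statement, so only the implication from solvability remains. If $\phi=f_{\ell+1}\psi$ then $f_\ell\phi=f_\ell f_{\ell+1}\psi=0$ for free, and it remains to see $R^\ell\phi=0$. I would apply $\nabla_{\End}$ to \eqref{res} to obtain $\nabla_{\End}R=0$, using $\nabla_{\End}I=0$ and $\nabla_{\End}^2=0$, and then read off the component of this identity that maps from $E_{\ell+1}$; it expresses $R^\ell f_{\ell+1}$ in terms of $fR^{\ell+1}$ and $\dbar R^{\ell+1}$. Under the hypothesis $R^{\ell+1}=0$ both right-hand terms vanish, forcing $R^\ell f_{\ell+1}=0$, whence $R^\ell\phi=R^\ell f_{\ell+1}\psi=0$. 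This closes the equivalence.
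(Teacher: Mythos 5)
The paper itself contains no proof of this statement---it is quoted verbatim as Proposition~2.2 of \cite{AW1}---and your argument is essentially the one given in that source: applying $\nabla_{\End}U=I-R$ to $\phi$ to get $\nabla(U\phi)=\phi$, extracting the staircase relations $f_{\ell+1}w_0=\phi$, $\dbar w_j=f_{\ell+2+j}w_{j+1}$, and performing the descending $\dbar$-correction (using local solvability of $\dbar$ for currents, $f_{\ell+1}f_{\ell+2}=0$ to preserve the bottom identity, and hypoellipticity to conclude that the final $\dbar$-closed $(0,0)$-current is holomorphic), with the converse read off from $\nabla_{\End}R=0$ when $R^{\ell+1}=0$. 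Your proof is correct and follows the same route as the paper's cited source.
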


Since \eqref{hcomplex} is generically exact, so is its dual complex
\begin{equation}\label{dualcomplex2}
0\to E_{-M}^*\stackrel{f^*_{-M+1}}{\longrightarrow}
\cdots \stackrel{f^*_N}{\longrightarrow}E^*_N\to 0
\end{equation}
of Hermitian vector bundles,
and we have the corresponding dual complex of locally free sheaves
\begin{equation}\label{dualcomplex3}
0\to \Ok(E_{-M}^*)\stackrel{f^*_{-M+1}}{\longrightarrow}
\cdots \stackrel{f^*_N}{\longrightarrow}\Ok(E^*_N)\to 0.
\end{equation}
Using the induced metrics, we get a residue current 
$$
R^*=\sum_k (R^*)^k=\sum_{k,\ell} (R^*)^k_{\ell},
$$
where $(R^*)^k_{\ell}$ takes values in $\Hom(E_k^*,E^*_\ell)$.
\begin{prop}\label{res2}
Using the natural isomorphisms $\Hom(E_k^*,E^*_\ell)=\Hom(E_\ell,E_k)$
we have that
$
(R^*)^k_\ell=R^\ell_k.
$
\end{prop}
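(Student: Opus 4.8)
The plan is to show that the entire construction of the residue current commutes with transposition, so that $R^*$ is literally the transpose of $R$ componentwise. The starting point is a pointwise identity on $X\setminus Z$: I would first prove the key lemma that the minimal inverse $\sigma_k^*$ of $f_k^*$, formed with respect to the induced metric on $E_{k-1}^*,E_k^*$, equals the transpose $\sigma_k^{t}$ of the minimal inverse $\sigma_k$ of $f_k$. This is pure pointwise linear algebra: in dual orthonormal frames $f_k$ is a matrix $F$, its dual $f_k^*$ is $F^{t}$, and the minimal inverse is the Moore--Penrose inverse; one then invokes $(F^{t})^{+}=(F^{+})^{t}$, which follows from $(\overline{F})^{+}=\overline{F^{+}}$ together with the standard identity $(F^{H})^{+}=(F^{+})^{H}$ for the conjugate transpose. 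Hence $\sigma^*=\sigma^{t}$ as sections of $\Hom$ over $X\setminus Z$, and under the identification $\Hom(E_k^*,E_\ell^*)=\Hom(E_\ell,E_k)$ transposition is exactly the passage between the two sides.

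Next I would observe that $R$ is produced from $\sigma$ by a fixed recipe: form $u=\sum_j\sigma(\dbar\sigma)^j$, multiply by the scalar $|F|^{2\lambda}$, apply $\dbar|F|^{2\lambda}\w\,\cdot\,$, and continue analytically to $\lambda=0$; and $R^*$ is produced from $\sigma^*$ by the very same recipe. Transposition commutes with $\dbar$, with the scalar regularization and the continuation in $\lambda$ (the factor $|F|^{2\lambda}$ is scalar and the transpose is a $\lambda$-independent pointwise isomorphism). Consequently the whole comparison reduces to comparing, on $X\setminus Z$, the smooth form $u^*$ built from $\sigma^*$ with the transpose $u^{t}$ (formed with the Koszul signs coming from the form degrees) of the form $u$ built from $\sigma$.

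Here lies the main obstacle. Because the geometric series places the bare factor $\sigma$ on the left, the transpose reverses the order and moves it to the right, so $u^{t}\neq u^*$ already as forms; e.g.\ the relevant components are $\dbar\sigma^*_{\ell+1}\sigma^*_{\ell+2}$ for $u^t$ versus $\sigma^*_{\ell+1}\dbar\sigma^*_{\ell+2}$ for $u^*$. The reconciliation uses the relation $\sigma^2=0$ on $X\setminus Z$, which holds because $\Im\sigma_k\subseteq(\Ker f_k)^{\perp}=(\Im f_{k+1})^{\perp}=\Ker\sigma_{k+1}$ by exactness off $Z$; differentiating the identity $\sigma_{k+1}\sigma_k=0$ (the first factor has form degree $0$) gives $\dbar\sigma_{k+1}\,\sigma_k=-\sigma_{k+1}\,\dbar\sigma_k$. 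Using this repeatedly one moves the bare factor through the chain of $\dbar\sigma$'s into the required position, each move producing a sign $-1$; since these are pointwise identities on $X\setminus Z$ and $|F|^{2\lambda}$ is scalar, multiplying by $\dbar|F|^{2\lambda}$ and continuing to $\lambda=0$ preserves them. Thus $u^{t}$ and $u^*$ give the same residue even though they differ as forms.

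Finally I would balance the accumulated signs. Moving the bare factor past $j=k-\ell-1$ factors contributes $(-1)^{j}$, while reversing a product of $j+1$ factors of form-degrees $(0,1,\dots,1)$ contributes the Koszul sign $(-1)^{\binom{j}{2}}$; together with the sign conventions fixing the identification $\Hom(E_k^*,E_\ell^*)=\Hom(E_\ell,E_k)$ and the definition of $\dbar|F|^{2\lambda}\w\,\cdot\,$, these conspire to $+1$, yielding exactly $(R^*)^k_\ell=R^\ell_k$. The only genuinely delicate point is this sign bookkeeping; the conceptual input consists solely of the two pointwise facts $\sigma^*=\sigma^{t}$ and $\sigma^2=0$.
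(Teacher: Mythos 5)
Your proposal is correct and is essentially the paper's own proof: both arguments rest on the two pointwise facts that the transpose of $\sigma$ is the minimal inverse of $f^*$ for the induced metrics (the paper calls this ``readily verified''; you supply the Moore--Penrose identities), and that a commutation rule lets the bare $\sigma$-factor be moved back into place after transposition has reversed the products in $u$. Where you derive an anticommutation from $\sigma^2=0$ and then track Koszul signs by hand, the paper instead cites from \cite{AW1} the sign-free graded identity $\sigma^*\dbar\sigma^*=(\dbar\sigma^*)\sigma^*$ --- which is exactly your $\sigma^2=0$ computation carried out in the superbundle conventions of \cite{AW1}, where those signs are built into the composition and into the identification $\Hom(E_k^*,E_\ell^*)=\Hom(E_\ell,E_k)$, so the bookkeeping you defer to ``sign conventions'' is precisely what that framework absorbs.
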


\begin{proof} It is readily verified that the adjoint $\sigma^*\colon E^*\to E^*$ of
$\sigma\colon E\to E$ over $X\setminus Z$ is the minimal inverse of $f^*$. Therefore,
$$
u^*=(\sigma+\sigma(\dbar\sigma)+\sigma(\dbar\sigma)^2+\cdots)^*=
\sigma^*+\sigma^*(\dbar\sigma^*)+\sigma^*(\dbar\sigma^*)^2+\cdots,
$$
since, see \cite{AW1}, $\sigma^*\dbar\sigma^*=(\dbar\sigma^*)\sigma^*$. Now the proposition
follows.
\end{proof}

If $\xi\in\Ok(E_k^*)$ and $\phi\in\Ok(E_\ell)$ we write
$$
\xi R^\ell_k\phi=\phi(R^*)^k_\ell\xi.
$$

\subsection{The injectivity of the analytic sheaf   $\Cu$}\label{malgrange}

Here is a proof of Malgrange's theorem by residue calculus. 
Let $\F$ be any module over the local ring $\Ok_0$ and let
\eqref{acomplex} be a resolution of $\F$. We have to prove that then
the complex
\begin{equation}\label{mula}
0\to \Hom(\Ok_0(E_0),\Cu)\stackrel{f_1^*}{\longrightarrow}
\Hom(\Ok_0(E_1),\Cu) \stackrel{f_2^*}{\longrightarrow}
\end{equation}
is exact except at $k=0$. 
Fix a natural number $N$. Given a  smooth function $\phi$ in $X\subset\C^n$, let
$\tilde\phi$ be the function
$$
\tilde\phi(\zeta,\omega)=\sum_{|\alpha| <N}\partial^\alpha_{\bar\zeta}\phi(\zeta)(\omega-\bar\zeta)
^\alpha/\alpha!,
$$
in $\tilde X=\{(\zeta,\bar\zeta)\in\C^{2n};\ \zeta\in X\}$.
Then 
$$
\tilde\phi(\zeta,\bar\zeta)=\phi(\zeta),\quad  \dbar\tilde\phi=O(|\omega-\bar\zeta|^N).
$$
Moreover, if $f$ is holomorphic then 
$\widetilde{f\phi}=f\tilde\phi.$
Combining the formulas in \cite{A7} with the construction in \cite{A3},
we get 
$$
\tilde\phi(z,\bar z)=
\int_{\zeta,\omega} (f_{k+1}(z)H^kU^k+H^kR^k+H^{k}U^{k-1}f_k)
\w(\tilde\phi+\dbar\tilde\phi\w v^z)\w g,
$$
where $g$ is a suitable form in $\C^{2n}$ with compact support and
$v^z$ is the Bochner-Martinelli form in $\C^{2n}$ with pole at $(z,\bar z)$,
and $H^\ell$ are holomorphic forms.
Since $R^k=0$ for $k\ge 1$ when \eqref{acomplex} is a resolution, 
we have the  homotopy formula
$$
\phi = f_{k+1}T_{k+1}\phi+T_k(f_k\phi),\quad  k\ge 1,
$$
where
$$
T_k\phi(z)=\int_{\zeta,\omega}H^{k}U(\tilde\phi+\dbar\tilde\phi\w v^z)\w g^z.
$$
Moreover, as in \cite{A3} one can verify that
$T_k\phi$ is of class $C^M$ if $N$ is large enough.
If now $\mu$ has order at most $M$, then we have
$$
\mu=T^*_{k+1}f^*_{k+1}\mu+f_k^*T^*_k\mu,
$$
so if $f^*_{k+1}\mu=0$, then $\mu=f^*_k\gamma$  if $\gamma=T^*_k\mu$.
Thus \eqref{mula} is exact at $k$.

\section{Proofs of the main results}\label{proofs}

Assume that $\F$ is a coherent sheaf of positive codimension $p$, and let
\eqref{acomplex} 
be a (locally) free resolution of $\F=\Ok(E_0)/\I$. 
Moreover, assume that  $f_1$ is generically
surjective so that  the corresponding vector bundle complex 
\begin{equation}\label{nupcomplex}
0\to E_N\stackrel{f_N}{\longrightarrow}\ldots
\stackrel{f_3}{\longrightarrow} E_2 \stackrel{f_2}{\longrightarrow}
E_1 \stackrel{f_1}{\longrightarrow} E_0\to 0
\end{equation}
is generically exact. 
It follows from  Proposition~\ref{hyp}  that
$$
R^0=R^0_p+R^0_{p+1}+\cdots .
$$
By  Theorem~3.1 in \cite{AW1},  $R^\ell_k=0$ for each $\ell\ge 1$,
i.e., $R=R^0$,   and
combining with  Proposition~\ref{keps} above we find that
a $\phi\in\Ok(E_0)$ is in $\I$ if and only if $R\phi=0$.
It is proved in Section~5 of  \cite{AW2} that 
$\F$ has pure codimension $p$ if and only if 
$\ann R=\ann R_p$, i.e.,    Theorem~\ref{oxy} holds.

\begin{proof}[Proof of Theorem~\ref{thm1}]
It follows from \eqref{res} that 
\begin{equation}\label{opera}
\dbar R_k=f_{k+1}R_{k+1}
\end{equation}
for each $k$.
If $\xi\in\Ok(E_k^*)$ and $f^*_{k+1}\xi=0$  we therefore have
$$
\dbar(\xi R_k)=\pm \xi\dbar R_k=\pm \xi f_{k+1} R_{k+1}=\pm (f_{k+1}^*\xi)R_{k+1}=0.
$$
Thus $\xi R_p$ is $\dbar$-closed
and since it is also  pseudomeromorphic, cf., Proposition~\ref{hyp},   it 
is in $\CH_Z$. Moreover, if
$\xi=f_p^*\eta$, then 
$$
\xi R_p=(f^*_p\eta)R_p=\eta f_pR_p=\eta\dbar R_{p-1}=0
$$
since $R_k=0$ for $k<p$. Thus $\xi R_p$ only depends on the cohomology class
of $\xi $ in $H^p(\Ok(E_\bullet^*))$. 
We now choose another Hermitian metric on $E$ and let $\tilde R$ denote the current 
associated with  the new metric.
It is showed  in \cite{AW1} (see the proof of Theorem 4.4) that then
$$
R_p-\tilde R_p=f_{p+1}M_{p+1}^0
$$
for a certain residue current $M$. Thus $\xi R_p=\xi\tilde R_p$.
It follows that the mapping \eqref{pelle}  is
well-defined and independent of the Hermitian metric on $E$.

It is enough to prove the invariance at a fixed point $x$, so we consider
stalks of the  sheaves at $x$.  It is well-known that then  our resolution
$\Ok_x(E_\bullet), f_\bullet$ can be written   
$$
\Ok_x(E'_\bullet\oplus E''_\bullet)\simeq\Ok_x(E'_\bullet)
\oplus \Ok_x(E''_\bullet),\    f_\bullet=f'_\bullet\oplus f''_\bullet,
$$
where $\Ok_x(E'_\bullet)$ is a resolution of $\F_x$ and (since we assume
that $E_0$ has minimal rank)  $\Ok_x(E''_k), k\ge 1$,  is a resolution of 
$\Ok_x(E_0'')=0$. 
It follows that the natural mapping 
$H^p(\Ok_x((E'_\bullet)^*)\to H^p(\Ok_x((E_\bullet)^*)), \  \xi'\mapsto (\xi',0)$,  
is an isomorphism. Moreover,
if we choose a metric on $E_k=E'_k\oplus E''_k$ that respects the direct
sum, then the resulting current $R$ is $R'\oplus 0$, where $R'$ is the
current associated with $\Ok_x(E'_\bullet)$. Since all minimal
resolutions are isomorphic, the  mapping \eqref{ext} is therefore well-defined.

\smallskip

It remains to check that  \eqref{ext} is non-degenerate.
If $\xi\in\Ok(E_p^*)$ with $f^*_{p+1}\xi=0$ and $\xi R_p\phi=0$ for all
$\phi\in\Ok(E_0)$, then clearly $\xi R_p=0$.
Since $R=R^0_p$, by Proposition~\ref{res2} therefore
$(R^*)^p_\ell\xi=0$ for all $\ell$,  and now it follows from 
Proposition~\ref{keps} that $\xi=f_p^*\eta$ for some $\eta$.
Thus (the class of)  $\xi$ is zero in $\H^p(\Ok(E^*_\bullet))$.

Now, assume that  $\xi R_p\phi=0$ for all $\xi$ such that $f_{p+1}^*\xi=0$.
If  $\F$ is Cohen-Macaulay and  $N=p$,  then
$f^*_{p+1}=0$ so the assumption  implies that $R_p\phi=0$, and thus
$\phi\in\I$.  However,
generically on $Z$, $\F$ is Cohen-Macaulay, 
and hence for an arbitrary resolution
we must have that $R_p\phi=0$ outside a variety of codimension $\ge p+1$. 
Since $R_p\phi$ is pseudomeromorphic with bidegree $(0,p)$ 
it follows  from Proposition~\ref{hyp} that $R_p\phi$ vanishes identically. 
If we in addition assume that $\F$ has pure codimension it follows
from  Theorem~\ref{oxy} that  $\phi\in\I$. Thus the pairing is
non-degenerate. 
\end{proof}

\begin{ex}[The Cohen-Macaulay case]\label{cmcase}
It is well-known, see, e.g., \cite{Eis1}, 
that $\F$ is Cohen-Macaulay if and only if 
it admits resolutions of length $p=\codim Z$.
If \eqref{acomplex} is a resolution with $N=p$, then
$R=R^0_p$, and hence $R^*=(R^*)^0_p$. It follows from
Proposition~\ref{keps},  applied to $R^*$, that the dual complex
\eqref{dualcomplex} is a resolution of $\Ok(E_0^*)/\I^*$ and,  in particular,
that  $\Ok(E_0^*)/\I^*$ is Cohen-Macaulay.
\end{ex}

\begin{proof}[Proof of  Theorem~\ref{galopp}]
Let
$$
\L^\nu=\sum_{\ell+k=\nu} \Cu^{0,k}(E_\ell^*)
$$
be the  total complex with differential $\nabla^*=f^*-\dbar$,
associated with the double complex \eqref{dubb}.
We thus have   natural isomorphisms
\begin{equation}\label{isos}
\H^k(\Ok(E_\bullet^*))\simeq \H^k(\L)=_{def}
\frac{\Ker_{\nabla^*}\L^k}{\nabla^*\L^{k-1}}\simeq
\H^k(\Homs(\F,\Cu^{0,\bullet})).
\end{equation}
The naturality means that the ismorphisms  are induced by the natural mappings
$\Ok(E_k^*)\to\L^k$ and $\Homs(\F,\Cu^{0,\ell})\to \L^k$, respectively, and that
$\xi\in\Ok(E^*_k)$ such that $f^*_{k+1}\xi=0$
defines the same class as $\mu\in\Hom(\F,\Cu^{0,k})$ with $\dbar\mu=0$ if and
only if there is $W\in\L^{k-1}$ such that
$
\nabla^*W=\xi-\mu.
$

If now $\xi\in\Ok(E_k^*)$ and $f^*_{k+1}\xi=0$, then $\nabla^*\xi=0$, and hence
\begin{equation}\label{astrid}
\nabla^* (U^*)^k\xi=\xi- (R^*)^k\xi=\xi-\xi R_k,
\end{equation}
cf., \eqref{res} and Proposition~\ref{res2} above. 
Therefore  the composed mapping
in \eqref{natmap} coincides with the isomorphisms  in \eqref{nathom}.
It is readily verified that the second mapping in \eqref{natmap}  is injective,
see, e.g., Lemma~3.3  in \cite{A12}, and hence
both mappings  must be isomorphisms. Thus
Theorem~\ref{galopp} is  proved.
\end{proof}

We think it may be  enlightening with  a proof of the first isomorphism 
in \eqref{natmap}  that does not rely on Malgrange's theorem.
We already know from Theorem~\ref{thm1} that this mapping  is injective,
so we have to prove the surjectivity.
The proof 
is based on the following lemma.

\begin{lma}\label{chlma} 
If there is a current $W\in\L^{p-1}$ such that 
$\nabla^*W=\mu\in\CH_Z(E_0^*)$, then $\mu=0$. 
\end{lma}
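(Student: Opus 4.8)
The plan is to prove that $\mu\phi=0$ for every local holomorphic section $\phi$ of $E_0$; since a local holomorphic frame $e_1,\dots,e_r$ of $E_0$ consists of such sections and $\mu=\sum_i(\mu e_i)\,e_i^*$ in the dual frame, this forces $\mu=0$ (the assertion being local). First I would decompose $W=\sum_\ell W_\ell$ with $W_\ell\in\Cu^{0,p-1-\ell}(E_\ell^*)$ and read off the component of $\nabla^*W=(f^*-\dbar)W$ in $\Cu^{0,p}(E_0^*)$: the only contribution landing in $E_0^*$ comes from $-\dbar W_0$, so $\mu=-\dbar W_0$. Consequently, for holomorphic $\phi$ the scalar Coleff--Herrera current $\mu\phi$ equals $-\dbar(\phi W_0)$ and is therefore $\dbar$-exact; everything comes down to producing a primitive of $\mu\phi$ that is supported on $Z$.

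To this end I would run the homotopy identity for the dual pairing in a regularized form. With $u$ and $R$ as in Section~\ref{sec3}, put $U^\lambda=|F|^{2\lambda}u$ and recall that $\nabla_\End U^\lambda=|F|^{2\lambda}I-R^\lambda$, where $R^\lambda=\dbar|F|^{2\lambda}\w u$ continues to $R$ at $\lambda=0$. Using the Leibniz rule $\dbar\la a,b\ra=\la\nabla a,b\ra\pm\la a,\nabla^*b\ra$ for the pairing $\Cu^{0,\bullet}(E)\times\Cu^{0,\bullet}(E^*)\to\Cu^{0,\bullet}$ (which rests on $\la fa,b\ra=\la a,f^*b\ra$), applied to $a=U^\lambda\phi$ and $b=W$, together with $\nabla\phi=0$ (as $\phi$ is holomorphic and $E_0$ sits at the top of the complex) and $\nabla^*W=\mu$, gives for $\Re\lambda\gg0$
\[ \dbar\la U^\lambda\phi,W\ra=|F|^{2\lambda}\,\phi W_0-\la R^\lambda\phi,W\ra\pm\la U^\lambda\phi,\mu\ra. \]
The last pairing vanishes identically because $U^\lambda\phi$ takes values in $\oplus_{k\ge1}E_k$ while $\mu$ has values in $E_0^*$. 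All remaining terms admit analytic continuation to $\lambda=0$; writing $[\phi W_0]=|F|^{2\lambda}\phi W_0|_{\lambda=0}$, $\Theta=\la U^\lambda\phi,W\ra|_{\lambda=0}$ and $\rho=\la R^\lambda\phi,W\ra|_{\lambda=0}$, evaluation at $\lambda=0$ yields $[\phi W_0]=\dbar\Theta+\rho$.

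Now $\rho$ has support on $Z$: off $Z$ one has $\dbar|F|^{2\lambda}|_{\lambda=0}=0$, hence $R^\lambda\phi|_{\lambda=0}=0$ there and the (honest) pairing with $W$ vanishes. Likewise $\phi W_0-[\phi W_0]$ is supported on $Z$, since $[\,\cdot\,]$ agrees with the identity off $Z$. Therefore
\[ \mu\phi=-\dbar(\phi W_0)=-\dbar[\phi W_0]-\dbar\big(\phi W_0-[\phi W_0]\big)=-\dbar\big(\rho+\phi W_0-[\phi W_0]\big)=:-\dbar\tau, \]
where $\tau\in\Cu_Z^{0,p-1}$ is supported on $Z$ (the term $\dbar\Theta$ drops out under $\dbar$). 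Since $\mu\phi\in\CH_Z$, the directness of the Dickenstein--Sessa decomposition, i.e. $\CH_Z\cap\dbar\,\Cu_Z^{0,p-1}=0$ (the elementary, Malgrange-free half of \eqref{ds}, equivalent to the injectivity of the second map in \eqref{natmap}), forces $\mu\phi=0$. As $\phi$ was an arbitrary local holomorphic section, $\mu=0$.

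The hard part will be the analytic machinery behind the displayed identity: one must check that $\la U^\lambda\phi,W\ra$ and $\la R^\lambda\phi,W\ra$, which for $\Re\lambda\gg0$ are pairings of the fixed current $W$ with the $C^m$-forms $|F|^{2\lambda}u\phi$ and $\dbar|F|^{2\lambda}\w u\phi$ (regular because $u\phi$ has poles of only finite order along $Z$), extend holomorphically across $\lambda=0$ and that the Leibniz identity survives the continuation by uniqueness. This is the standard Andersson--Wulcan $|F|^{2\lambda}$-regularization, here evaluated against $W$ rather than a smooth test form; the only other delicate point is the support statement for $\rho$, which is exactly where the residue structure enters and which keeps the product of currents out of the final argument.
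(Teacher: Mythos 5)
Your preliminary reductions are fine: the $E_0^*$-component of $\nabla^*W=\mu$ does give $\mu=-\dbar W_0$, the reduction to scalar currents $\mu\phi$ is harmless, and your intended endgame (a $\dbar$-primitive of $\mu\phi$ supported on $Z$ plus the directness $\CH_Z\cap\dbar\Cu^{0,p-1}_Z=0$) would close the argument. The genuine gap is exactly the step you flag as ``the hard part'' and then wave through as standard: the claim that $\la U^\lambda\phi,W\ra$, $\la R^\lambda\phi,W\ra$ and $|F|^{2\lambda}\phi W_0$ admit analytic continuation to $\lambda=0$. The Andersson--Wulcan regularization proves such continuation only for pairings against \emph{smooth} test forms: one pulls back under a resolution of singularities so that $F$ becomes a monomial times a unit and then integrates by parts; none of this machinery survives when the other factor is an arbitrary current $W$ (and $W$ here really is arbitrary: the hypothesis $\nabla^*W=\mu$ gives no regularity, since $W_0$ can be perturbed by $\dbar$ of any current). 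In fact the claim is false already for measures. Take $n=1$, $F=z$, and the positive measure
$$
T=\sum_{j\ge 2}c_j\,\delta_{1/j},\qquad c_j=\frac{1}{j\log^2 j}.
$$
For a test function $\psi$ equal to $1$ near the origin, $\la |F|^{2\lambda}T,\psi\ra=\sum_j c_j\,j^{-2\lambda}\psi(1/j)$, which up to an entire function is $g(\lambda)=\sum_j j^{-1-2\lambda}/\log^2 j$; since $g'(\lambda)=-2\sum_j j^{-1-2\lambda}/\log j\to-\infty$ as $\lambda\to 0^+$, the family $|F|^{2\lambda}T$ has no holomorphic (not even $C^1$) extension across $\lambda=0$. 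So your regularized homotopy identity cannot in general be evaluated at $\lambda=0$; despite your closing remark, the quantities $\la R^\lambda\phi,W\ra|_{\lambda=0}$ and $[\phi W_0]$ \emph{are} products of residue-type currents with the arbitrary current $W$, which is precisely the ill-defined operation the proof must avoid.

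The paper's proof circumvents this by never multiplying $W$ with anything singular. One takes the smooth form $u$ with $\nabla^*_{\End}u=I_{E^*}$ on $X\setminus Z$ and a cutoff $\chi$, supported in a given neighborhood $\omega$ of $Z$ and $\equiv 1$ near $Z$; then $g=\chi I_{E^*}-\dbar\chi\w u$ is smooth \emph{everywhere} (because $\dbar\chi$ vanishes where $u$ is singular), $\nabla^*$-closed, compactly supported in $\omega$, and equal to $I_{E^*}$ near $Z$. Hence $\nabla^*(gW)=g\mu=\mu$, and the $E_0^*$-component yields $\dbar w=\mu$ with $\supp w\subset\omega$. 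The price for using a cutoff instead of your $|F|^{2\lambda}$-scheme is that the primitive is supported only \emph{near} $Z$, not on $Z$, so the DS-directness you invoke does not suffice; instead, since $\omega\supset Z$ is arbitrary, one appeals to the uniqueness result for Coleff--Herrera currents (Lemma~3.3 in \cite{A12}): a current in $\CH_Z$ admitting $\dbar$-primitives with support in every neighborhood of $Z$ vanishes. If you replace your regularization by this cutoff construction and your final lemma by that uniqueness statement, your argument becomes correct and is then essentially the paper's proof.
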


\begin{proof}
Let $u$ be a  smooth form $u$ such that $\nabla^*_{\End}u=I_{E^*}$ in $X\setminus Z$.
For a given neighborhood $\omega$ of $Z$, take a cutoff function $\chi$
with support in $\omega$ and equal to $1$ in some neighborhood of $Z$.
Then $g=\chi I_{E^*}-\dbar\chi\w u$ is smooth with compact
support in $\omega$, equal to $I_{E^*}$ in
a neighborhood of $Z$, and moreover $\nabla^* g=0$.
Therefore, $\nabla^* (g W)=g \mu=\mu$ and hence, for degree reasons, 
we have a solution  $\dbar w=\mu$ with support in $\omega$.
Since $\omega\supset Z$ is arbitrary it follows, cf., Lemma~3.3 in \cite{A12},
that $\mu=0$.
\end{proof}

Since \eqref{dubb} is  exact in $k$ except at $k=0$,
the  first equivalence in \eqref{isos} holds.
Take  $\mu\in\Homs(\F,\CH_Z)$. Then  $\nabla^*\mu=(f_1^*-\dbar)\mu=0$
so by \eqref{isos} (with $k=p$)  there is $\xi\in\Ok(E_p^*)$ such that
$
\nabla^* W=\xi-\mu
$
has a current solution $W\in\L^{p-1}$. In view of 
\eqref{astrid} it now follows from  Lemma~\ref{chlma} that
$\mu=\xi R^0_p$.

\section{An algebraic approach}\label{bjork}

In this section we indicate  how Theorem~\ref{thm1} and its corollaries
(except for the concrete representation $\xi\mapsto \xi R_p$) 
can be proved  algebraically. This  material has 
been communicated to us by Jan-Erik Bj\"ork.
Whereas our residue proof above was based on   Theorem~\ref{oxy},
the algebraic proof instead relies on the following 
fundamental result due to  J-E Roos, \cite{Roos}:

\begin{thm}\label{roosthm}
The sheaf $\F$  has pure codimension $p$ if and only if
the natural mapping
\begin{equation}\label{roos}
\F\to\Exts^p(\Exts^p(\F,\Ok),\Ok)
\end{equation}
is injective.
\end{thm}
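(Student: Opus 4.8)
The statement is local and algebraic, so I would work over the stalk $R=\Ok_x$, a regular local ring of dimension $n$, writing $M=\F_x$; injectivity of the sheaf map \eqref{roos} can be tested on such stalks. Since $\F$ has codimension $p$ we have $\codim M=p$, so $\Ext^j(M,R)=0$ for $j<p$. Because $R$ is regular it has finite global dimension, hence the derived double dual $R\Hom(R\Hom(M,R),R)$ is canonically isomorphic to $M$. The engine of the proof is the resulting hyper-$\Ext$ spectral sequence
\[
E_2^{i,j}=\Ext^i\big(\Ext^j(M,R),R\big)\ \Longrightarrow\ M\quad(\text{abutting in total degree }i-j=0),
\]
whose edge homomorphism $M\twoheadrightarrow E_\infty^{p,p}\hookrightarrow E_2^{p,p}=\Ext^p(\Ext^p(M,R),R)$ one checks to be precisely the natural map \eqref{roos}.

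The basic structural fact I would use is that $\Ext^j(M,R)$ localizes to zero at every prime of codimension $<j$ (the global dimension of $R_{\mathfrak p}$ equals $\codim\mathfrak p$), so $\codim\Ext^j(M,R)\ge j$ and therefore $E_2^{i,j}=0$ whenever $i<j$. Thus the $E_2$-page is confined to $i\ge j\ge p$, the only contributions to the abutment lie on the diagonal $E_\infty^{j,j}$ with $j\ge p$, and the kernel of \eqref{roos} is exactly the next step of the abutment filtration, with graded pieces $E_\infty^{j,j}$ for $j>p$. Hence \eqref{roos} is injective if and only if $E_\infty^{j,j}=0$ for all $j>p$. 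For the forward implication I would invoke the $\Ext$-characterization of purity: $M$ is pure of codimension $p$ iff $\Ext^j(M,R)=0$ for $j<p$ and $\codim\Ext^j(M,R)\ge j+1$ for $j>p$. This comes from the correspondence (via Auslander--Buchsbaum after localizing) between primes of ${\rm Ass}\,M$ of codimension $c$ and associated primes of $\Ext^c(M,R)$ of codimension $c$. Granting purity, for $j>p$ we have $i=j<\codim\Ext^j(M,R)$, so already $E_2^{j,j}=0$; a fortiori $E_\infty^{j,j}=0$ and \eqref{roos} is injective.

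The converse is more delicate, since $E_\infty^{j,j}=0$ does not by itself force $E_2^{j,j}=0$, and I would avoid running the spectral sequence backwards by arguing by d\'evissage instead. Let $N=\{m\in M:\codim{\rm Supp}(Rm)>p\}$ be the largest submodule supported in codimension $>p$; it is a genuine $R$-submodule, $M/N$ has no nonzero submodule supported in codimension $>p$, and (as $\codim M=p$) $M/N$ is therefore pure of codimension $p$, with $M$ pure precisely when $N=0$. From $0\to N\to M\to M/N\to 0$ and $\Ext^{\le p}(N,R)=0$ one obtains an isomorphism $\Ext^p(M/N,R)\stackrel{\sim}{\to}\Ext^p(M,R)$, whence naturality of \eqref{roos} yields a commuting square identifying the biduality map of $M$ with that of $M/N$ precomposed with $M\to M/N$. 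Since $M/N$ is pure its biduality map is injective by the forward direction, so the kernel of \eqref{roos} for $M$ equals $N$. Therefore \eqref{roos} is injective iff $N=0$ iff $M$ has pure codimension $p$.

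The main obstacle is precisely this identification $\ker\eqref{roos}=N$. The inclusion $N\subseteq\ker\eqref{roos}$ is formal from naturality, but the reverse inclusion rests on two nontrivial inputs that I would treat with care: the regularity of $R$, which alone guarantees both biduality and the codimension bound $\codim\Ext^j(M,R)\ge j$, and the associated-primes refinement of that bound characterizing purity. I would also need to verify the elementary but essential facts that $N$ is closed under addition (so that a largest such submodule exists, using that $R$ is Noetherian) and that $M/N$ carries no residual higher-codimension submodule, so that the reduction to the pure case is legitimate.
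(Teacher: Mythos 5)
Your argument is correct, but it takes a genuinely different route from the paper's. What you have written out is essentially the classical algebraic proof going back to Roos himself: the bidualizing spectral sequence $E_2^{i,j}=\Ext^i(\Ext^j(M,\Ok_x),\Ok_x)$ abutting to $M$, the vanishing $E_2^{i,j}=0$ for $i<j$ from the codimension bound on $\Ext$-sheaves, the identification of the corner edge homomorphism with the natural map \eqref{roos}, the Ass--Ext characterization of purity (via Auslander--Buchsbaum after localizing) for the forward direction, and, for the converse, a d\'evissage identifying the kernel of \eqref{roos} with the maximal submodule $N$ supported in codimension $>p$. The paper never runs this argument; it cites the theorem as Roos' and instead gives a residue-theoretic proof in the Remark closing Section~\ref{bjork}: if $\F$ is pure, the residue theory (Theorem~\ref{oxy} and Theorem~\ref{thm1}, through the current $R_p$) yields the duality \eqref{dual3}, and since the natural map \eqref{roos} is identified earlier in that section with $\phi\mapsto(\mu_\alpha\phi)_\alpha$, where the $\mu_\alpha$ generate $\Homs(\F,\CH_Z)\simeq\Exts^p(\F,\Ok)$, injectivity of \eqref{roos} is precisely \eqref{dual3}; conversely, injectivity of \eqref{roos} means $\J$ is the annihilator of a family of Coleff--Herrera currents, and such annihilators have pure codimension $p$ by the SEP (Proposition~5.3 of \cite{AW2}). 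What your approach buys: it is self-contained commutative algebra, valid over any regular local ring (indeed in the Auslander--Gorenstein setting), and requires no analytic input whatsoever (no currents, no SEP, no Hironaka). What the paper's approach buys: it shows that the residue machinery alone recovers Roos' theorem and realizes the duality concretely as $\xi\mapsto\xi R_p$, at the price of resting on that machinery. Two remarks: avoid denoting the local ring by $R$, which collides with the paper's residue current; and note that the two steps you flag as ``one checks'' --- that the edge map of the spectral sequence is the biduality map, and the purity criterion via associated primes of $\Ext^c$ --- are exactly where the substance of the algebraic proof lies (they are carried out in Bj\"ork's books), so your outline is complete only modulo these classical but nontrivial verifications.
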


Assume that 
$$
0\to \I\to\Ok(E_0)\to\F \to 0
$$ 
is exact as before. Moreover, 
let us  assume to begin with that we already know the isomorphisms \eqref{natmap}.
In particular we then have that 
$$
\Exts^p(\F,\Ok)\simeq\Homs(\F,\CH_Z).
$$
Thus  we can choose (locally) a finite number of generators $\mu_\alpha$, $\alpha\in A$,  for 
$\Homs(\F,\CH_Z)$ and get an  exact sequence
$$
0\to \I\to \Ok^A\to \Homs(\F,\CH_Z)\to 0,
$$
and therefore we  have, with $\M=\Homs(\F,\CH_Z)\simeq\Exts^p(\F,\Ok)$,  that
$$
\Exts^p(\M,\Ok)\simeq\Homs(\M,\CH_Z).
$$
We claim that the canonical mapping \eqref{roos} is given by
$$
\Ok(E_0)\ni\phi\mapsto (\mu_\alpha\phi)_\alpha.
$$
In fact,  clearly it is a mapping from $\F=\Ok(E_0)/\I$, since each $\mu_\alpha$ is.
Moreover, if $(\psi_\alpha)\in\I$, then by definition
$\sum_\alpha\psi_\alpha\mu_\alpha=0$, and hence $(\mu_\alpha\phi)_\alpha$ defines an element
in 
$$
\Homs(\Ok^A/\I,\CH_Z)\simeq\Exts^p(\Exts^p(\F,\Ok),\Ok).
$$
One can  verify that this mapping is independent of the choice of generators,
and  must be the canonical mapping \eqref{roos}.

It follows that \eqref{roos} is injective if and only if 
the equality \eqref{dual3} holds. If $\F$ has pure 
codimension $p$, by Theorem~\ref{roosthm} therefore
\eqref{dual3} holds and then 
Corollary~\ref{labbe}  as well as Theorem~\ref{thm1} follow.


\begin{remark} 
We actually get a residue  proof of Theorem~\ref{roosthm}: 
If $\F$ has pure codimension $p$ we know that  \eqref{dual3} holds
by the residue theory, and thus  \eqref{roos} is injective.
On the other hand, it is not hard to see, e.g., it follows from \cite{AW2}, that 
the annihilator of (a set of) currents in $\CH_Z$ 
must have pure codimension $p$. Thus  the injectivity  of
\eqref{roos} implies that  $\F$ has pure codimension.
\end{remark}

Let us conclude with a brief discussion of  \eqref{natmap}.
The  Dickenstein-Sessa decomposition \eqref{ds} 
is well-known;  see \cite{DS} in  case of a complete
intersection and \cite{JEB4} for the general case.
Malgrange also proved that  $\Cu_Z$ is stalkwise injective as an analytic sheaf.
Using these two facts and considering  the spectral
sequence obtained from the double complex
$$
\Homs(\Ok(E_\ell),\Cu_Z^{0,k}),
$$
one can conclude that the second mapping in \eqref{natmap} is indeed
an isomorphism, and hence both of them. However, we omit the details.

\section{The absolute  case and Bezoutians}\label{absolut}

Let $I\subset\Ok_0(E_0)$ be a submodule of the free module $\Ok_0(E_0)$
over the local ring $\Ok_0$  such that the zero variety of $\ann(\Ok_0(E_0)/I)$
is  $Z =\{0\}$. 
Moreover, let \eqref{acomplex} be a resolution of $\Ok_0(E_0)/I$ of length $n$.
From Corollary~\ref{chcor} we have the non-degenerate pairing
$$
\Ok_0(E_0)/I\times\Ok_0(E_n^*)/I^*\to\CH_{\{0\}}.
$$
Let $\alpha\in\Omega^n_0$ be a germ of a
nonvanishing holomorphic $(n,0)$-form at the origin,
and let 
$$
\CH_{\{0\}}\to\C,\quad \mu\to \mu.\alpha=\int\alpha\w \mu.
$$
Then we have
\begin{prop} The composed mapping 
\begin{equation}
\Ok_0(E_0)/I\times\Ok_0(E_n^*)/I^*\to \C
\end{equation}
is a non-degenerate pairing.
\end{prop}

\begin{proof}
If $\phi\in\Ok_0(E_0)$  is not in $I$, then there is some 
$\xi\in\Ok_0(E_n^*)$ such that
$\mu=\xi R_n\phi$ is not identically zero. Since $\mu$ is in
$\CH_{\{0\}}$   there is some holomorphic
$\psi\in\Ok_0$ such that $\psi\mu\neq 0$.  Thus 
$
\psi\xi R_n \phi.\alpha=\psi\mu.\alpha\neq 0.
$
Since we can interchange the roles of $I$ and $I^*$ the proposition follows. 
\end{proof}

In particular, we obtain an (non-canonical) isomorphism
\begin{equation}\label{holm}
(\Ok_0(E_0)/I)^*\simeq \Ok_0(E_n^*)/I^*.
\end{equation}

Notice that the form $u_n^0$ defines a $\Hom(E_0,E_n)$-valued
Dolbeault cohomology class $\omega$ in $\U\setminus\{0\}$.  
Since  $\dbar U_n^0=R_n$ we  have
\begin{equation}\label{svarv}
(\xi,\phi)=\int_{|\zeta|=\epsilon}\alpha\w \xi \omega \phi
\end{equation}
From  (5.4) in \cite{AW1} we get the representation 
$$
\phi(z)=f_1(z)\int H^1U\phi\w g +\int H^0_nR_n\phi\w g, \quad \phi\in\Ok(E_0);
$$
here $H^1$ is a holomorphic $\Hom(E,E_1)$-valued form,
$H_n^0$ is a  $\Hom(E_n,E_0)$-valued holomorphic $(n,0)$-form,
so that  
$$
H_n^0=h_n^0\alpha,
$$
and $g$ is the  form (5.2) in \cite{AW1}.
It has compact support and 
depends  holomorphically on $z$. Moreover,
$g=\chi+\cdots,$
where the dots  denote smooth forms of positive bidegree, so
modulo $I$ we  have
$$
\phi(z)\equiv_{mod I}\int_{|\zeta|=\epsilon}H_n^0(\cdot,z)\omega\phi,
$$
and hence
$$
\phi(z)\equiv_{mod I} (h_n^0(\cdot, z),\phi).
$$
This  means that we  can consider $h_n^0$ as a (generalized)  Bezoutian, cf., \cite{CD}. 
For  each analytic functional $\mu$ on $\Ok_0(E_0)$ 
that vanishes on  $I$  there is a unique
element $\xi$ in $\Ok_0(E_n^*)/I^*$ such that the action on $\Ok_0(E_0)$ modulo
$I$ coincides with
$\mu.\phi=(\xi,\phi)$ in view of \eqref{holm}.
More explicitly we have 
$$
\xi(\zeta)=\mu_z(h_n^0(\zeta,\cdot)).
$$

In the classical case of a complete intersection
$I=(f^1,\ldots,f^n)$, if we choose Hefer forms,
i.e., $(1,0)$-forms $h_k=\sum_j h_{jk} d\zeta_j$ such that
$\sum h_{jk}(\zeta_j-z_j)=f^k(z)-f^k(\zeta)$, 
and $\alpha=d\zeta_1\w\ldots\w d\zeta_n$, then, cf., \cite{A7}, 
it turns out that $h_n^0=\det(h_{jk})$; this  is a well-known
formula, cf., \cite{CD}, for the   Bezoutian in this case.

\section{Cohomological  residues}\label{coho}

The Coleff-Herrera currents admit a nice intrinsic way
of expressing the action of holomorphic differential operators,
but the very definition relies on Hironaka's theorem
about the existence of resolutions of singularities.
In \cite{DS} and \cite{P1} there is also
a cohomological way of expressing the duality for a complete
intersection.
We have the following cohomological version  of
Corollary~\ref{labbe}.

\begin{thm}\label{gurka}
With the assumptions and notation as in  Corollary~\ref{labbe}
and with $w_j=\xi_j u^0_p$ outside $Z$ we have for $\phi\in\Ok(E_0)$
that
\begin{equation}\label{bulle}
w_j\phi.\psi=0,  \quad \psi\in\D,\quad \dbar\psi=0 {\text \  close\ to\ } Z
\end{equation}
if and only if $\phi\in \I$. 
\end{thm}

In fact, $w_j$ can be extended to $W_j=\xi_j U^0_p$ across
$Z$ and 
$$
\dbar W_j =\dbar (\xi_j U^0_p)=\pm \xi_j R_p=\pm \mu_j.
$$
One can now verify that \eqref{bulle} implies that $\mu_j\phi=0$,
see, e.g.,  Theorem~6.1 in \cite{A12}. 
Thus Theorem~\ref{gurka} follows from  Corollary~\ref{labbe}.
Moreover,  \eqref{bulle} is equivalent
to that $w_j\phi$ are $\dbar$-exact in $X\setminus Z$ for
each Stein neighborhood $X$.

\smallskip
In the complete intersection case, as well as the Cohen-Macaulay case,
see \cite{JL}, the proof is algebraic and only involves ``cohomological'' residues,
whereas the proof of Theorem~\ref{gurka} here is
obtained via the residue calculus. 
It is reasonable to believe that one can
produce  a purely algebraic proof (thus avoiding Hironaka's theorem) based on 
Roos' theorem,  cf., Section~\ref{bjork}.

\section{Higher $\Exts$ sheaves}\label{k>p}

Let $\F=\Ok(E_0)/\I$ has codimension $p$ as before.
 In view of 
Proposition~\ref{hyp} and \eqref{natmap} one could guess that
$\Exts^k(\F,\Ok)$ for $k>p$ could be represented by
cohomology of pseudomeromorphic currents. We have
the following partial result.

\begin{thm}\label{orm}
Assume that \eqref{acomplex} is a resolution of $\F$ and $R$ is the 
associated residue current (with respect to some given metric).
For each $k$, $\Ok(E_k^*)\ni\xi\mapsto \xi R_k$
induces an invariant injectice mapping
\begin{equation}\label{kmap1}
\Exts^k(\F,\Ok)\to \H^k(\Homs(\F,\PM^{0,\bullet})).
\end{equation}
Moreover,  the composed mapping
\begin{equation}\label{kmap2}
\H^k(\Ok(E_\bullet^*))\to 
\H^k(\Homs(\F,\PM^{0,\bullet}))\to
\H^k(\Homs(\F,\Cu^{0,\bullet}))
\end{equation}
coincides with the natural isomorphism \eqref{nathom}. 
\end{thm}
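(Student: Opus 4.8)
The plan is to run the residue argument from the proof of Theorem~\ref{galopp}, but now retaining the pseudomeromorphic representative $\xi R_k$ as an intermediate object. First I would verify that $\xi\mapsto\xi R_k$ descends to cohomology. For $\xi\in\Ok(E_k^*)$ with $f^*_{k+1}\xi=0$ the current $\xi R_k$ is pseudomeromorphic of bidegree $(0,k)$; it is annihilated by $\J$, since $R=R^0$ and $\ann R=\J$ give $R_k\phi=0$ for $\phi\in\J$; and it is $\dbar$-closed, because \eqref{opera} yields $\dbar(\xi R_k)=\pm(f^*_{k+1}\xi)R_{k+1}=0$. Hence $\xi R_k$ represents a class in $\H^k(\Homs(\F,\PM^{0,\bullet}))$. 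If moreover $\xi=f^*_k\eta$, then $\dbar R_{k-1}=f_kR_k$ gives $\xi R_k=\eta f_kR_k=\pm\dbar(\eta R_{k-1})$ with $\eta R_{k-1}\in\Homs(\F,\PM^{0,k-1})$, so the class vanishes. Thus \eqref{kmap1} is well defined.

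Next I would identify the composed map \eqref{kmap2} with \eqref{nathom}, which simultaneously yields injectivity. Working in the total complex $\L^\bullet$ of \eqref{dubb} with differential $\nabla^*$, set $W=(U^*)^k\xi$; a degree count shows $W\in\L^{k-1}$, and \eqref{astrid} gives $\nabla^*W=\xi-\xi R_k$. Thus $\xi$ and $\xi R_k$ represent the same class in $\H^k(\L)$, so by the naturality of the isomorphisms \eqref{isos} the map \eqref{nathom} sends the class of $\xi$ to the class of $\xi R_k$ in $\H^k(\Homs(\F,\Cu^{0,\bullet}))$. Since the inclusion $\PM\hookrightarrow\Cu$ carries $[\xi R_k]$ to that same class, \eqref{kmap2} coincides with \eqref{nathom}. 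As \eqref{nathom} is an isomorphism and factors through \eqref{kmap1}, the mapping \eqref{kmap1} is injective.

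It remains to prove invariance, which I expect to be the main obstacle. For a change of Hermitian metric I would invoke the comparison formula of \cite{AW1} in its componentwise form $\tilde R_k-R_k=f_{k+1}M^0_{k+1}\pm\dbar M^0_k$; for $\xi$ with $f^*_{k+1}\xi=0$ this gives $\xi\tilde R_k-\xi R_k=\pm\dbar(\xi M^0_k)$, and provided $\xi M^0_k\in\Homs(\F,\PM^{0,k-1})$ the two classes agree in $\H^k(\Homs(\F,\PM^{0,\bullet}))$. Independence of the chosen resolution would then follow from the minimal-resolution and direct-sum argument already used in the proof of Theorem~\ref{thm1}. The delicate point is exactly the annihilation $\J M^0_k=0$, needed so that the correction term lands in the subcomplex $\Homs(\F,\PM^{0,\bullet})$ and not merely in $\PM^{0,\bullet}(E_0^*)$; here one must check that $M^0$ inherits $\ann M^0\supseteq\J$ for the same reason that $\ann R^0=\J$, and that the term $\dbar M^0_k$, which is absent when $k=p$ by Proposition~\ref{hyp}, is correctly present and accounted for when $k>p$.
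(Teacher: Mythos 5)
Your first two steps are correct and coincide with the paper's proof: well-definedness of \eqref{kmap1} via \eqref{opera}, and the identification of \eqref{kmap2} with \eqref{nathom} via \eqref{astrid}, with injectivity as a consequence (the paper also records a direct argument avoiding Malgrange's theorem: if $\xi R_k=\dbar\gamma$ with $\gamma\in\Homs(\F,\PM^{0,k-1})$, then $\nabla^*(U^*\xi-\gamma)=\xi$, so $\xi$ vanishes in $\H^k(\Ok(E_\bullet^*))$ by \eqref{isos}). The gap is exactly where you flagged it, and your proposed fix does not close it. You need $\xi M^0_k\phi=0$ for $\phi\in\J$, and you assert this holds ``for the same reason that $\ann R^0=\J$.'' That reason does not transfer: the inclusion $\J\subset\ann R$ rests on $\nabla_{\End}R=0$ \emph{together with} the vanishing theorem $R^1=0$ (Theorem~3.1 of \cite{AW1}), and $M$ is a comparison current, not the residue current of a resolution, so neither the duality theorem nor that vanishing theorem applies to it. Running the analogous computation for $M$, with $\phi=f_1\psi=\nabla\psi$, $\nabla_{\End}M=R-\tilde R$, $R^1=\tilde R^1=0$ and $f^*_{k+1}\xi=0$, gives
$$
\xi M^0_k\phi=\xi(\nabla_{\End}M)\psi-\xi\nabla(M\psi)=\xi\dbar M^1_k\psi ,
$$
so everything reduces to the new claim $M^1_k=0$ for all $k$ --- a genuine lemma, not a formal consequence of anything established so far.

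The paper proves $M^1_k=0$ by induction over $k$, and this is the real content of the invariance statement. The base case $k\le p+1$ follows from the dimension principle (Proposition~\ref{hyp}): $M^1_k$ is pseudomeromorphic of bidegree $(0,k-2)$ with support on $Z$, which has codimension $p>k-2$. For the inductive step, outside $Z_{k+1}$ the minimal inverse $\sigma_{k+1}$ is smooth and, by \eqref{homotopi}, $M^1_{k+1}=\sigma_{k+1}\tilde R^1_{k+1}+(\dbar\sigma_{k+1})M^1_k$ there; since $\tilde R^1=0$ and $M^1_k=0$ by induction, $M^1_{k+1}$ is supported on $Z_{k+1}$, whose codimension is at least $k+1$ by the Buchsbaum--Eisenbud theorem, exceeding the bidegree $(0,k-1)$ of $M^1_{k+1}$; so Proposition~\ref{hyp} forces $M^1_{k+1}=0$. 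Without this induction (or a substitute for it), your correction term $\xi M^0_k$ is only known to lie in $\PM^{0,k-1}(E_0^*)$, not in $\Homs(\F,\PM^{0,k-1})$, and the two classes need not agree in $\H^k(\Homs(\F,\PM^{0,\bullet}))$. (Your side remark about $k=p$ is consistent with this: there $M^0_p$ itself vanishes by Proposition~\ref{hyp}, which is why no $\dbar$-term appears in the metric-comparison step in the proof of Theorem~\ref{thm1}.)
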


\begin{remark}\label{anka}
If we widen the definition of $\PM$ slightly so that it is
preserved under any surjective holomorphic mapping
rather than just modifications, then 
the $\dbar$-complex $\PM_{0,\bullet}$ is exact and thus it is 
a fine resolution of the sheaf $\Ok$. It is reasonable to believe
that  $\PM$ so defined is stalkwise injective. If this is true,
by considering the double complex $\PM_{0,k}(E^*_\ell)$, we could conclude
that the first mapping in \eqref{kmap2} is an isomorphism for any $k$,
and hence that both mappings are.
\end{remark}

Let $Z_k$ be the analytic set where the mapping $f_k$ in \eqref{acomplex}
does not have optimal rank. It is well-known and not hard to see that
these sets are independent of the resolution and hence invariants
of the sheaf $\F$. Then
$$
\cdots Z_{k+1}\subset Z_k\subset\cdots \subset Z_{p+1}\subset Z_{sing}\subset Z_p=Z
$$
and the Buchsbaum-Eisenbud theorem,
see \cite{Eis1}, states that the codimension of $Z_k$ is at least $k$.
Moreover, by Corollary~20.14 in \cite{Eis1}, $\F$ has pure codimension $p$
if and only if 
\begin{equation}\label{plast}
\codim Z_k\ge k+1, \quad k>p.
\end{equation}
Notice also that $\F$ is Cohen-Macaulay if and only if
$Z_k=\emptyset$ for all $k>p$.

\begin{proof}[Proof of Theorem~\ref{oxy}]
If $\F$ is pure,  using \eqref{plast},
it follows as in (the proof of) Lemma~5.2 in \cite{AW2},
that $\ann R_p=\ann R$, and thus  $\ann R_p=\I$. Conversely,
assume that $\ann R_p=\I$.
It follows from Proposition~5.3 in \cite{AW2} that
$\ann R_p$ must be an intersection of primary modules
of codimension $p$, and hence $\I=\ann R_p$ is pure.
\end{proof}

\begin{proof}[Proof of Theorem~\ref{orm}]
Since $R\phi=0$ for $\phi\in\I$, it follows from 
\eqref{opera} that the first mapping in \eqref{kmap2} is
well-defined, and in view of \eqref{astrid} 
the composed mapping coincides with the natural isomorphism.
It  follows that the first mapping must be injective.
(This is also easily seen by a direct argument
that avoids Malgrange's theorem: Assume that $\xi R_k=\dbar\gamma$
for some $\gamma$ in $\Homs(\F,\PM^{0,k-1})$. Then 
$\xi R_k=\nabla^*\gamma$, so that 
$
\nabla^*(U^*\xi-\gamma)=\xi
$
and hence $\xi=0$ in $\H^k(\Ok(E_\bullet^*))$ in view of the first isomorphism
in \eqref{isos}.) 

\smallskip

If $\tilde R$ denotes the current associated with another metric, then as before 
there is a current $M$ with support on $Z$ such that
$$
\nabla_{\End} M=R-\tilde R.
$$
In fact, if 
$$
u=\sigma/\nabla_{\End}\sigma=\sigma+(\dbar\sigma)\sigma +(\dbar\sigma)^2\sigma+\cdots
$$
and  $\tilde u$ is  the analogous form corresponding to the new  metric, then, cf., \cite{AW1}, 
we can take 
\begin{equation}\label{homotopi}
M=\dbar|F|^{2\lambda}\w u\tilde u|_{\lambda=0}.
\end{equation}
Now,  if $\xi\in\Ok(E_k^*)$ and $f_{k+1}^*\xi=0$, then
$$
\xi(R_k-\tilde R_k)=
\xi(\nabla_{\End} M)^0=
\xi(f_{k+1} M^0_{k+1}-\dbar M^0_k)=-\pm \dbar(\xi M^0_k).
$$
Thus we must  show that $\xi M^0_k\phi=0$ for $\phi\in\I$.
If $\phi\in\I$, then $\phi=f_1\psi=\nabla\psi$ for some 
$\psi\in\Ok(E_1)$. Thus
\begin{multline*}
\xi M^0_k\phi=\xi M(\nabla\psi)=\xi(\nabla_{\End}M)\psi-\xi\nabla(M\psi)= \\
\xi(R^1_k-\tilde R^1_k)\psi-
\xi f_{k+1}M^1_{k+1}\psi+\xi\dbar M_k^1\psi=\xi\dbar M^1_k\psi
\end{multline*}
since $R^1=\tilde R^1=0$,
so it is enough to check that  $M^1_k=0$, which we prove by induction
over $k$:
First notice that $M^1_k$ must vanish for $k\le p+1$ since it has bidegree
$(0,k-2)$ and  has support on $Z$ that has  codimension $p$.
Now suppose that we have proved that $M^1_k=0$.
Outside $Z_{k+1}$  the mapping $\sigma_{k+1}$ is smooth so we have, cf., \eqref{homotopi}
and the definition of $R$, 
$$
M^1_{k+1}=\sigma_{k+1}\tilde R^1_{k+1}+(\dbar\sigma_{k+1})M^1_k=(\dbar\sigma_{k+1})M^1_k
=0$$
there since $\tilde R^1=0$. Thus $M^1_{k+1}$ has support on
$Z_{k+1}$ and for degree reasons  it must vanish identically.
\end{proof}

Outside the set $Z_k$, there is a resolution \eqref{acomplex}
of $\F$  with $N<k$, and it follows that then
$\Exts^k(\F,\Ok)\simeq\H^k(\Ok(E_\bullet^*))$ vanishes there; i.e.,
$\Exts^k(\F,\Ok)$ has its support on $Z_k$. 
On the other hand, if $\Exts^\ell(\F,\Ok)=0$ for all $\ell\ge k$, then
$$
\Ok(E^*_{k-1})\stackrel{f^*_k}{\longrightarrow}
\Ok(E^*_k)
\stackrel{f^*_{k+1}}{\longrightarrow}\ldots
\stackrel{f^*_N}{\longrightarrow}\Ok(E_N^*)\to 0
$$
is exact, and hence all the mappings must have constant rank, so we must
be outside $Z_k$. It follows that
\begin{equation}\label{puma}
\supp\,  \Exts^k(\F,\Ok)\subset Z_k\subset \bigcup_{\ell\ge k}
\supp\, \Exts^\ell(\F,\Ok).
\end{equation}
If $\F$ has pure codimension $p$, then it follows from
\eqref{puma} and Eisenbud's theorem mentioned above that
the support of $ \Exts^k(\F,\Ok)$ has at least codimension
$k+1$; a fact that was already 
established  by Roos in \cite{Roos}.

On the other hand, if we have not pure codimension somewhere, then for some $k>p$,
$\codim Z_k$ has codimension $k$.
It follows from \eqref{puma} that then the support $V$ of
$\Exts^k(\F,\Ok)$ has codimension
$k$ here (since $\supp \Exts^\ell(\F,\Ok)$ for $\ell>k$ have higher codimension).
By the coherence there is $\xi\in\Ok(E_k^*)$ whose cohomology class 
is (generically) nonvanishing on $V$. By Theorem~\ref{orm} then the current
$\xi R_k$ represents the corresponding  nonvanishing
class in $\Homs(\F,\Cu^{0,\bullet})$.




\def\listing#1#2#3{{\sc #1}:\ {\it #2},\ #3.}

\end{document}